\documentclass[11pt,reqno]{amsart}

\usepackage[T1]{fontenc}
\usepackage{lmodern}
\usepackage{amsmath,amssymb,amsthm,mathtools,todonotes}
\usepackage{microtype}
\usepackage{booktabs,array}
\usepackage{tikz}
\usetikzlibrary{arrows.meta,positioning,calc,fit,backgrounds}
\usepackage[hmargin=1.1in,vmargin=1in]{geometry}
\usepackage{placeins}
\usepackage[colorlinks=true,linkcolor=blue!45!black,citecolor=blue!45!black,
  urlcolor=blue!45!black]{hyperref}
\hypersetup{pdftitle={Thompson's groups are sofic},
  pdfauthor={Maksym Chaudkhari, Kate Juschenko, Kyrylo Muliarchyk}}

\newtheorem{theorem}{Theorem}[section]
\newtheorem{proposition}[theorem]{Proposition}
\newtheorem{lemma}[theorem]{Lemma}
\newtheorem{claim}[theorem]{Claim}
\newtheorem{corollary}[theorem]{Corollary}
\theoremstyle{definition}
\newtheorem{definition}[theorem]{Definition}

\newtheorem{remark}[theorem]{Remark}
\theoremstyle{remark}
\numberwithin{equation}{section}
\DeclareMathOperator{\Sym}{Sym}
\newcommand{\id}{\mathrm{id}}
\newcommand{\Prob}{\mathbb P}
\newcommand{\FF}{\mathbb F}
\newcommand{\eps}{\varepsilon}
\newcommand{\row}[2]{#1\!:\!(#2)}
\newcommand{\dH}{d_{\mathrm H}}
\newcommand{\len}{\ell}
\newcommand{\emptyword}{\varnothing}

\title[Thompson's groups are sofic]{Thompson's groups are sofic}
\author{Maksym Chaudkhari}
\address{M.C., Department of Mathematics and Statistics, University of South Florida, 4202 E Fowler Ave, CMC 342, Tampa, FL 33620, USA}
\email{mc637@usf.edu, maksymchaudkhari@utexas.edu}
\author{Kate Juschenko}
\address{K.J., Department of Mathematics, University of Texas at Austin, 2515 Speedway C1200, Austin, TX 78712, USA}
\email{katie.juschenko@gmail.com}
\author{Kyrylo Muliarchyk}
\address{K.M., Mathematics Department, Bar-Ilan University, Ramat Gan, 5290002, Israel}
\email{kyrylo.muliarchyk@biu.ac.il}
\date{\today}
\subjclass[2020]{Primary 20F65; Secondary 20F69, 20B30}
\keywords{Thompson's group, sofic group, hyperlinear group, sofic approximation,
stability}

\begin{document}
\maketitle
\begin{abstract}
We prove that Thompson's groups $F$, $T$ and $V$ are sofic. As a corollary, all of them are also hyperlinear and not permutation stable. The proof relies on an explicit construction of sofic approximations to $F$ and $V$. Moreover, the construction allows us to obtain effective quantitative bounds on the parameters of sofic approximations.
\end{abstract}
\maketitle

\section{Introduction}

Defined by Richard Thompson in 1965, Thompson's groups $F$, $T$, and $V$ are a classical family of groups of dynamical origin. These groups are infinite and finitely presented, and the reader can find their standard presentations, for example in \cite{CFP}. The standard presentation of $F$ has two generators and two relators given by commutators. Moreover, a presentation of $V$ that uses just two generators is described in \cite[Theorem 1.3]{BQ}.

Thompson's groups $F$, $T$, and $V$ have natural dynamical interpretations. Thompson's group $F$ is defined as a group of increasing piecewise linear homeomorphisms of $[0,1]$ with finitely many dyadic breakpoints and slopes that are integral powers of $2$. Furthermore, all of Thompson's groups can be defined in terms of their action on the Cantor space by prefix replacement transformations.

Thompson's groups are known for their rigidity properties. For example, the groups $T$, $V$ and the commutator subgroup of $F$ are simple. In particular, these groups are not residually finite, and since $F$, $T$, and $V$ are finitely presented, Thompson's groups are also not locally embeddable into finite groups (LEF). Furthermore, the questions about their approximation properties turned out to be notoriously difficult. For example, while non-amenability of Thompson's groups $T$ and $V$ was established quite early, the non-amenability question for $F$ became a major open problem in geometric group theory that was resolved only in September 2026 by an AI model developed by OpenAI, see \cite{OAI_F_26}, or \cite{KL26} for exposition. Soficity and hyperlinearity of Thompson's groups $F$, $T$, and $V$ were also longstanding open questions. We will recall the definition of soficity in Section~\ref{sec:prelim}, and  informally, a discrete group is sofic if the multiplication tables of its finite subsets can be simulated by permutations of a finite set up to a small error in normalized Hamming distance. The class of all sofic groups includes all amenable, all residually finite groups and all LEF groups. Moreover, finding an example of a non-sofic group was a major open problem with the first example constructed by an AI model developed by OpenAI in \cite{OAI_Sof_26}. Thompson's groups $F$, $T$ and $V$ were also considered as potential examples of non-sofic groups. The main goal of this article is to prove the following theorem.

\begin{theorem}\label{thm:V-intro}
Thompson's group $V$ is sofic. 
\end{theorem}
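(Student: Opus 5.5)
Since $V$ is finitely presented, soficity reduces to a statement about a fixed finite generating set and finitely many relators. Concretely, I would show that for every $\eps>0$ there is a finite set $X$ and an assignment of permutations $\sigma_s\in\Sym(X)$ to the generators $s$ such that every relator $r$ satisfies $\dH(r(\sigma),\id)<\eps$. I must also show that every fixed nontrivial word $w$ (of bounded length) satisfies $\dH(w(\sigma),\id)>1-\eps$. The natural model is the action of $V$ on the Cantor space $\{0,1\}^{\mathbb N}$ by prefix replacement.

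For the finite model I would take $X=\{0,1\}^n$, the binary words of a large length $n$. I would let an element acting by prefix replacement $u_i\mapsto v_i$ act on a word $x\in X$ as follows. If $x$ starts with some $u_i$ and the prefix change keeps the length fixed, we replace the prefix directly. When $|u_i|\neq|v_i|$, the length changes, so we need a truncation or padding rule. The simplest choice is to delete or append bits at the end of the word, in a way that makes each generator a genuine bijection of $X$. Alternatively, $X$ could be the set of words of length in a window $[n-K,n+K]$, with a cyclic ``overflow'' correction at the boundary lengths. In either model I would check that each generator acts as a permutation, and that a relator acts correctly on every $x$ unless the computation touches the last $O(1)$ bits of $x$ or the boundary of the window.

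The key estimates are then of two kinds.

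First, for a word $w$ of length $L$ in the generators, the composite $w(\sigma)$ agrees with the true action of $w$ on all $x$ whose tail is not disturbed. Each generator shifts the cut point by at most a constant $c$, so the fraction of bad $x$ is $O(cL/n)$ or $O(cL/K)$. This shows that relators act almost trivially.

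Second, I need nontrivial elements to have almost no fixed points. Here I would use that a nontrivial $g\in V$ moves an open set, but it may fix a clopen set of positive measure. This is a genuine obstacle: a permutation model of $V$ acting on $\{0,1\}^n$ cannot give Hamming distance close to $1$ directly. I would fix this by amplification. Choose approximations that are correct on an $\eps$-fraction and separate nontrivial elements by at least a constant $\delta_w>0$. It is standard that such weak sofic approximations, with distances uniformly bounded below on nontrivial elements, imply soficity via tensor powers $\sigma^{\otimes k}$ on $X^k$. Under tensor powers, relator errors grow at most by a factor of $k$, while the fixed-point fraction decays like $(1-\delta)^k$. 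I would choose $k$ first and then $n$.

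The main obstacle is the lower bound. Every nontrivial $w$ must move a fraction of $X$ bounded below uniformly in $n$. This fraction should be the measure of the support of $w$ in the Cantor space, since the finite model faithfully simulates the action on cylinders away from the tail. The difficulty is making the length-changing prefix replacements bijective without destroying this faithful simulation. I would first try the window-of-lengths model with a measure weighting words of length $m$ by $2^{-m}$, realized combinatorially by duplicating words. I would then verify that the boundary defect set has mass $O(L/K)$.

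Finally, $T\le V$ and $F\le V$, and subgroups of sofic groups are sofic, so those cases follow as well.
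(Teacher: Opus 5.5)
Your tensor-power amplification is sound. The standard trick does upgrade almost-homomorphisms with a separation constant independent of $n$ to genuine sofic approximations. The gap is that the finite model meant to feed it cannot exist, and the obstruction is structural rather than technical.

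\textbf{The three proposed models fail.}
\begin{enumerate}
\item On $\{0,1\}^n$ the generator $x$ replaces $0u$ by $00u$. There are $2^{n-1}$ words beginning with $0$ but only $2^{n-2}$ beginning with $00$. So any permutation sends at least a quarter of all points to words with the wrong prefix, and your $O(cL/n)$ bound on bad points is false.
\item In the window model, about half of all words have the maximal length $n+K$. So $x$ alone pushes roughly a quarter of the set out of the window, giving a defect of order $1$, not $O(L/K)$.
\item Weighting length-$m$ words by $2^{-m}$ through duplication also fails. The words $\alpha u$ and $\beta u$ then carry different numbers of copies, so no bijection can match them.
\end{enumerate}

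\textbf{The common cause.} Suppose the generators approximately intertwined cylinder frequencies in your finite set; that is what ``faithful simulation away from the tail'' means. Then a limit of these frequencies would be a $V$-invariant finitely additive probability $\nu$ on clopen subsets of $\mathcal C$. No such $\nu$ exists. Indeed, $x(0\mathcal C)=00\mathcal C$ forces $\nu(01\mathcal C)=0$. Since $V$ is transitive on nonempty proper clopen sets, $\nu$ vanishes on all of them, so $\nu(\mathcal C)=\nu(0\mathcal C)+\nu(1\mathcal C)=0$. For $F$, any invariant $\nu$ must vanish on every clopen set avoiding neighbourhoods of $0^\infty$ and $1^\infty$. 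So elements supported in the interior would get $\delta_w\to 0$. Hence both your relator estimate and the lower bound you flag as the main obstacle fail for any model whose points simulate points of $\mathcal C$ or of $W$.

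\textbf{The missing idea.} The points of the finite set should be whole labelled pointer structures, not points of the Cantor space.
\begin{itemize}
\item The paper takes $\Omega_n=\{(L,R): L,R:[n]\to[n]\}$. For all but an $O(4^r/n)$ fraction of configurations, the depth-$r$ ball around the root $0$ is a genuine binary tree.
\item The generators rewire pointers near the root according to their tree-pair diagrams. Such rewirings are automatically bijections of $\Omega_n$, so no invariant measure on $\mathcal C$ is ever needed.
\item The paper uses a faithful action $\sigma$ on $W$ that fixes $\emptyword$ and extends the partial prefix action. For $V$ this is obtained by passing to $\mathrm{Rist}_V(11\mathcal C)\cong V$ and conjugating by $w\mapsto w10^\infty$. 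Addresses of vertices near the root then transform by $\sigma$.
\item Consequently a short word fixes a locally tree-like configuration if and only if it is trivial in the group. A nontrivial element therefore moves \emph{every} locally tree-like configuration.
\end{itemize}
Separation thus comes from faithfulness of $\sigma$, not from the measure of supports. One gets Hamming distance at least $1-\eps$ directly, with no amplification needed.
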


As a corollary, we obtain that the groups $F$ and $T$ are sofic, as well.

\begin{corollary}\label{cor:TFV}
Thompson's groups $F$, $T$, and $V$ are sofic, and hence hyperlinear. Moreover, they are not permutation-stable.  
\end{corollary}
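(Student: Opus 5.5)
The plan is to deduce the corollary from Theorem~\ref{thm:V-intro} using three standard facts about sofic groups and stability.

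\textbf{Soficity of $F$ and $T$.} Soficity passes to subgroups. If $\Gamma$ is sofic and $H\le\Gamma$, a finite subset $K\subset H$ is also a finite subset of $\Gamma$. Any $(K,\eps)$-sofic approximation $\sigma\colon\Gamma\to\Sym(n)$ therefore restricts to one of $H$. Here I take the definition of soficity via almost-multiplicative, almost-free maps on finite subsets, as recalled in Section~\ref{sec:prelim}. It remains to recall the standard inclusions $F\le T\le V$. In the Cantor-space model, $V$ acts on $\{0,1\}^{\mathbb N}$ by prefix replacements. $T$ is the subgroup of transformations preserving the cyclic order on leaves, and $F$ is the subgroup preserving the linear order. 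Equivalently, in the tree-pair model, $F$ consists of pairs with trivial leaf permutation, and $T$ of pairs with cyclic leaf permutation. So $F$ and $T$ are sofic.

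\textbf{Hyperlinearity.} Every sofic group is hyperlinear. Compose a sofic approximation with the permutation representation $\Sym(n)\to U(n)$. The normalized Hilbert--Schmidt distance satisfies $\|P_\sigma-P_\tau\|_2^2 = 2\,\dH(\sigma,\tau)$, so almost-multiplicativity and almost-freeness in Hamming distance give the same properties in $\|\cdot\|_2$. Note that $\|P_\sigma - I\|_2^2=2(1-\mathrm{fix}(\sigma)/n)$.

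\textbf{Failure of permutation stability.} I would use the theorem of Glebsky--Rivera / Arzhantseva--Păunescu: a finitely presented sofic group that is permutation stable must be residually finite. Sketch: take a sofic approximation $\sigma_n$ on the finitely many generators. It almost satisfies the finitely many relators. Stability lets us perturb $\sigma_n$ to genuine homomorphisms $\rho_n\colon G\to\Sym(k_n)$ that are still close to $\sigma_n$. For fixed $g\neq e$, $\rho_n(g)$ is far from $\id$ for large $n$. So the $\rho_n$ separate points, and $G$ is residually finite.

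Now apply this to each group. $F$, $T$, $V$ are finitely presented, sofic by the above, and not residually finite, as noted in the introduction. For $T$ and $V$ this is because they are infinite simple. For $F$ it is because $[F,F]$ is infinite simple, and a residually finite group cannot contain an infinite simple subgroup. Hence none of them is permutation stable.

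No step is a real obstacle; the only content is Theorem~\ref{thm:V-intro} itself. The only point needing care is to invoke the stability criterion in the form requiring finite presentability, which holds here.
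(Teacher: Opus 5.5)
Your proposal is correct and follows the same route as the paper. It passes soficity from $V$ to the subgroups $F\le T\le V$, deduces hyperlinearity from soficity, and rules out permutation stability because sofic, permutation-stable, finitely presented groups are residually finite, which none of Thompson's groups is. The only difference is that you sketch the two standard facts yourself (the permutation-matrix embedding with $\|P_\sigma-P_\tau\|_2^2=2\,\dH(\sigma,\tau)$, and the perturbation argument giving residual finiteness), whereas the paper simply cites Elek--Szab\'o and Arzhantseva--P\u{a}unescu for them.
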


We refer the reader to \cite[Section 3]{AP} for the definition of stability in permutations.
\begin{proof}
The first claim follows immediately from the fact that soficity passes to subgroups, and $F$ and $T$ can be realized as subgroups of $V$. Then hyperlinearity of Thompson's groups is an immediate corollary of soficity, see \cite[Theorem 2]{ES05}.
The second claim follows from the fact that every sofic permutation-stable group must be residually finite, see for example \cite[Theorem 7.2]{AP}, but none of Thompson's groups is residually finite.
\end{proof}

The proof of Theorem ~\ref{thm:V-intro} relies on an explicit construction of a sequence of sofic approximations for $V$. Since this construction is easier to describe in the case of Thompson's group $F$, we will postpone the proof of Theorem \ref{thm:V-intro} until the last section of the article, and we will first give a detailed proof of the following theorem.
\begin{theorem}\label{thm:main}

Thompson's group $F$is sofic.
\end{theorem}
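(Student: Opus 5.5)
We construct sofic approximations of $F$ explicitly, using the standard presentation $F=\langle x_0,x_1 \mid [x_0x_1^{-1},x_0^{-1}x_1x_0],\,[x_0x_1^{-1},x_0^{-2}x_1x_0^2]\rangle$. Since $F$ is finitely presented, it suffices to do two things for each $n$. First, produce permutations $\sigma_0,\sigma_1$ of a finite set $X_n$ under which both relators act as the identity outside a set of density $\eps_n\to 0$. Second, show that for every fixed nontrivial $g\in F$ (as a word $w$), the permutation $w(\sigma_0,\sigma_1)$ fixes a proportion of points tending to $0$. For the first requirement it is enough that the relators hold approximately on the generators, because approximate relations then propagate to all consequences of the relators with error growing linearly in word length. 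The second requirement is the heart of soficity.

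\textbf{The model.} I would use the action of $F$ on infinite binary sequences by prefix replacement, and truncate it to finite words. Let $X_n=\{0,1\}^n$, the binary words of length $n$. Each element of $F$ is given by a pair of finite prefix codes (dyadic subdivisions) $u_i\mapsto v_i$ with $|u_i|,|v_i|\le k$. It acts on a length-$n$ word $u_i s$ by sending it to $v_i s$, but this changes the length by $|v_i|-|u_i|$. To fix the length, I would keep a buffer: regard $X_n$ as words $a\,s\,b$, where $s$ is a central segment and $a,b$ are boundary regions. Length mismatches are then absorbed by shifting or padding at the far end, say by appending or deleting the tail bit cyclically from a reservoir. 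A cleaner option is to take $X_n$ to be the set of finite binary strings of lengths in $[n,2n]$ with weights, or equivalently a "bi-infinite truncated" model. On such a set $x_0$ acts as a near-bijection that fails only on strings whose length is near the boundary, and these have density $O(1/n)$. I would then complete each partial injection to a permutation arbitrarily. The defect of each relator is controlled by the points where some intermediate step of the relator word leaves the good region, and there are $O(k/n)$ such points.

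\textbf{Freeness.} For a nontrivial $g\in F$ with a reduced tree-pair representative, $g$ acts on the Cantor set with finitely many dyadic intervals of fixed points. The support of $g$ has positive measure, and on it $g$ moves points. In the finite model, a string $u_i s$ is fixed only if $v_i s = u_i s$. If $u_i\neq v_i$ with $|u_i|\ne|v_i|$, this forces $s$ to be eventually periodic in a constrained way, and the proportion of such $s$ decays exponentially in $n$. If $|u_i|=|v_i|$, then $u_i\ne v_i$ and no string is fixed. However, the intervals where $g$ is the identity contribute fixed points with positive density. This is the \textbf{main obstacle}: $F$ is far from acting freely, since every element fixes neighborhoods of $0$ and $1$.

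To overcome this, I would randomize or twist the model so that the fixed-point intervals become negligible. The first attempt is to conjugate the truncated action by a "zooming" map. I would take the sofic approximation to be the action on strings whose germs at the endpoints are irrelevant. That is, use a product over many scales: $X_n=\prod_{j} \{0,1\}^{n}$ with $g$ acting diagonally on independently chosen random dyadic windows (rescalings $[a,b]\to[0,1]$ by elements of $F$). A fixed $g\ne 1$ acts nontrivially on a random window with probability bounded below, so in a product of $m$ independent windows the fixed proportion is at most $(1-c_g)^m\to 0$. Meanwhile the relator defects only add, giving $m\cdot O(1/n)$, which is fine if $m=o(n)$.

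Checking that conjugating by window maps preserves the approximate relations is routine: each window is itself a copy of the truncated model with a fixed $F$-element pre-composed. The delicate quantitative step is bounding $c_g$ uniformly, for which I would use the density of $F$-orbits and the fact that the support of $g$ contains a dyadic interval.
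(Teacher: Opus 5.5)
\textbf{The base model does not exist.} Your reduction is sound: approximate relators plus a bound $c_g>0$, independent of $n$, on the fraction of points moved by each nontrivial $g$, then amplification by products. You also correctly identify fixed intervals as the enemy. The gap is that the truncated prefix-replacement model you need cannot exist, because prefix replacement does not preserve counting measure on strings. On $\{0,1\}^n$, the $2^{n-1}$ words beginning with $0$ should go to words beginning with $00$, of which there are only $2^{n-2}$. So every permutation sends at least a quarter of the points to words not beginning as $x_0$ prescribes. On words of lengths in $[n,2n]$, about half the points have length $2n$, and the points $0u$ with $|u|=2n-1$ (a quarter of all points) are pushed out of the set. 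The boundary defect is therefore $\Theta(1)$, not $O(1/n)$. Weights $2^{-m}$ do not help either: the maps do not preserve them, and if you realize them by multiplicities, the copies of the words $0u$ outnumber those of $00u$ by a factor of two. Conceptually, suppose most points of a finite model carry a position in $[0,1]$ that the generators move approximately by the natural action. Then every limit of the position distributions is an $F$-invariant probability measure. Since $x_0^k(t)\to0$ for all $t<1$, any such measure is supported on $\{0,1\}$.

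\textbf{No amplification can repair freeness.} Every nontrivial $g\in F'$ is the identity near $0$ and $1$. (Your claim that every element fixes such neighborhoods is false for $x_0$, but it holds on $F'$.) Hence such $g$ fixes a proportion of points tending to $1$ in every coordinate, and $c_g$ is not bounded below. Random windows do not change this. If a window is conjugation by $h_j\in F$, then in an approximate homomorphism $\varphi(h_jgh_j^{-1})$ is close to $\varphi(h_j)\varphi(g)\varphi(h_j)^{-1}$, which has exactly as many fixed points as $\varphi(g)$; you are just taking an ordinary tensor power. If a window is the embedding of $F$ onto the copy supported on $[a,b]$, supports only shrink.

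\textbf{What the paper does instead.} $F$ acts not on points of the Cantor set but on random labelled binary trees. The finite set is $\Omega_n=\{(L,R):L,R\colon[n]\to[n]\}$. The paper's permutations $X_n,Y_n$ rewire the pointers at the root exactly as in the tree-pair diagrams of $x$ and $y$. Addresses of vertices transform under the action $\sigma$ of $F$ on all of $W$, which extends prefix replacement and fixes $\emptyword$. Because these are honest permutations of $\Omega_n$, there is no measure obstruction. Consider configurations whose depth-$(3k+6)$ neighborhood of the root is a complete binary tree; these have probability tending to $1$. On them, a word $\rho$ of length $k$ fixes the configuration if and only if $\pi(\rho)=1$, since a nontrivial $\pi(\rho)$ changes the address of some vertex within distance $k+2$ of the root. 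Fixed intervals of $g$ play no role, and no amplification is needed.
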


\subsection{Outline of the argument} Our proofs of Theorem ~\ref{thm:V-intro} and Theorem ~\ref{thm:main} rely on an explicit construction of a sequence of sofic approximations for $V$ and $F$, respectively.

In order to prove Theorem~\ref{thm:main}, we construct a sequence of  finite sets $\{\Omega_n\}_{n \in N}$ and permutations $X_n, Y_n \in \Sym (\Omega_n)$ that will simulate the standard generators $x$ and $y$ of Thompson's group $F$. 
The finite sets $\Omega_n$ are explicitly defined as follows
\[
  \Omega_n=\{(L,R):L,R:\{0,\ldots,n-1\}\longrightarrow
                         \{0,\ldots,n-1\}\}.
\]
An element of \(\Omega_n\) can also be viewed as a rooted directed labeled graph with the vertex set $\{0, \ldots, n-1\}$ and a root at $0$, where each vertex has exactly one outgoing edge labeled $L$ and one outgoing edge labeled $R$.

The permutations $X_n$ and $Y_n$ that approximate generators $x$ and $y$ of $F$ act on the set of these graphs by changing outgoing edges at the root $0$ and at most two other vertices close to the root. In the generic case, the diagrams describing these changes are provided in Figures \ref{fig:x}--\ref{fig:y}. The reader may notice that they closely resemble the tree pair diagrams defining elements of $F$. This analogy becomes more pronounced, when we introduce the notion of an \textit{address} of a number from $\{0, \ldots, n-1\}$ with respect to a given element of $\Omega_n$. The address of a number $m$ will be essentially an encoding of the shortest directed path from $0$ to $m$ by a word over the binary alphabet $\{L,R\}$, when such a path exists. The maps $X_n$ and $Y_n$ will act on most addresses by the same prefix-replacement transformations that define the standard action of $F$ on the Cantor set. However, there is an important caveat. The  standard prefix-replacement action of $F$ on the Cantor space induces only a partial action of $F$ on the set of all finite binary words $W=\{0,1\}^*$. In our case, the addresses are finite words, so we will first define an action $\sigma$ of $F$ on $W$ that fixes the empty word and extends the partial action of $F$. In the generic case, this action will precisely describe the changes of the addresses of \textit{all} numbers close to $0$ in the rooted graph. Using this action, we will show that any fixed word $\rho$ in the free group $\FF(x,y)$, when evaluated over $X_n$ and $Y_n$ for some large enough $n$, will fix a generic point of $\Omega_n$ if and only if $\rho$ evaluates to a trivial element of $F$ as a word over the standard generators $x$ and $y$. However, if $\rho$ defines a non-trivial element in $F$, the corresponding permutation in $\Sym(\Omega_n)$ will move most of the points of $\Omega_n$.

\subsection{Organization of the article}
The first part of our article will mostly focus on Thompson's group $F$.
Section~\ref{sec:prelim} contains the necessary background material on sofic groups and Thompson's groups $F$ and $V$.

In Section~\ref{sec:fw_action} we construct the action $\sigma$ of Thompson's group $F$ on the set of all finite binary words $W$ that will extend its standard partial action on $W$ while also fixing the empty word.

Section~\ref{sec:maps_and_approximations} contains the explicit description of sofic approximations to $F$. In particular, this is the section where introduce the sets $\Omega_n$ and study the action of words over $X_n$ and $Y_n$ on the points from special subsets $G_n(r)$ of $\Omega_n$.

In Section~\ref{sec:finishing_the_proof}, we show that for a large enough $n$, a special set $G_n(r)$ contains almost all of the points from $\Omega_n$, and then present a proof of an even stronger quantitative version of Theorem~\ref{thm:main}, see Theorem~\ref{thm:F_sofic_quantitative}.

Finally, in Section~\ref{sec:final_proof_for_V}, we explain how one can adapt the arguments from Sections~\ref{sec:fw_action} --\ref{sec:finishing_the_proof} to establish an even stronger quantitative version of Theorem~\ref{thm:V-intro}, see Theorem~\ref{thm:V-quantitative-sofic}. Moreover, we list several classes of groups that embed into $V$, and hence are also sofic.

\subsection{Acknowledgments.} The authors are grateful to Dmytro Savchuk for helpful conversations on the topic and for the comments on the first draft of the article.

\subsection{AI use statement.}
The first version of the proof of soficity for Thompson's group $F$ was produced by
ChatGPT 6 Astra in a long session with the first author on October 6, 2026. The argument used the sets $\Omega_n$ but was weaker than the one presented in the article. It relied on explicit  evaluations of the commutator of $X_n$ and $Y_n$ and the two relators of $F$ expressed as words over $X_n$ and $Y_n$ to produce an embedding of $F$ in a metric ultraproduct of finite symmetric groups, followed by an application of \cite[Theorem 1]{ES05}. In particular, it did not allow for effective control over the sofic approximations for $F$. The proof presented in Sections \ref{sec:fw_action}--\ref{sec:final_proof_for_V} was developed by the authors. ChatGPT 6 Astra was also used for assistance with illustrations and proofreading. The authors take full responsibility for the contents of the article.

\section{Preliminaries}\label{sec:prelim}

\subsection{Notation}
We consider left group actions, so the permutations are composed from right to left. Our commutator convention is
\[
 [u,v]=uvu^{-1}v^{-1}.
\]

Furthermore, for $n\geq1$, we use the notation $[n]=\{0,\ldots,n-1\}$. For the semigroup of functions from $[n]$ to $[n] $ we will consider right composition to ensure compatibility with the concatenation of words in the free monoids.

Finally, throughout this article, we denote by $\mathcal{C}$ the Cantor space realized as the space of infinite binary sequences, $\mathcal{C}=\{0,1\}^{\mathbb{N}}$ with the product topology. The set of all finite binary words is denoted by $W=\{0,1\}^{*}$. Equipped with the operation of concatenation of words, $W$ is a free monoid generated by $\{0,1\}$.
For every $\alpha \in W$, we denote by $\alpha\mathcal{C}$ the clopen subset of $\mathcal{C}$ that consists of all $w \in \mathcal{C}$ with prefix $\alpha$.

\subsection{Sofic groups}
We recall the definition of sofic groups below, and we refer the reader to \cite{P08} for a more detailed introduction to sofic and hyperlinear groups.

For a nonempty finite set $A$, write $\Sym(A)$ for its permutation
group and put
\begin{equation}\label{eq:hamming}
 \dH(s,t)=\frac{|\{a\in A:s(a)\ne t(a)\}|}{|A|},
 \qquad \len(s)=\dH(s,\id).
\end{equation}
It is easy to see that the metric $\dH$ is invariant under both left and right
multiplication. 

\begin{definition}\label{def:sofic}
A group $G$ is \emph{sofic} if, for every finite set $E\subseteq G$
containing $id_G$ and every $\eps \in (0,1)$, there are a nonempty finite set
$A$ and a map $\varphi:E\to\Sym(A)$, with $\varphi(id_G)=\id$, such that
\begin{align*}
&\forall g,h,gh\in E: \; \dH(\varphi(g)\varphi(h),\varphi(gh)) \leq \eps.&
       \\
 &\forall g\in E\setminus\{id_G\}:   \; \dH(\varphi(g),\id) \geq 1-\eps. 
\end{align*}
For fixed $E$ and $\epsilon$, we will also call any such pair an $(E, \epsilon)$-sofic approximation to $G$.
\end{definition}

Moreover, for a finitely generated group $G$ with a chosen symmetric finite generating set $S$, soficity of $G$ admits an equivalent geometric definition.

\begin{definition}[Gromov's condition]
A finitely generated group $G$ is sofic if for any finite symmetric generating set $S$ of $G$, any $R >0$, and any $\epsilon \in (0,1)$, there exists a finite marked graph $\Gamma$ with edges labeled by $S$, such that the following condition holds. If we pick a vertex $v$ of $\Gamma$ from the uniform distribution, then with probability at least $(1-\epsilon)$, the ball of radius $R$ centered at $v$ in $\Gamma$ is isomorphic, as a rooted labeled graph, to the ball of radius $R$ in the Cayley graph of $G$ with respect to $S$.
\end{definition}

When the generating set $S$ is fixed and clear from the context, we will call such a graph $\Gamma$ an $(R,\epsilon)$-marked approximation to $G$.

\subsection{Thompson's groups $F$ and $V$}

We briefly recall the definitions and basic properties of Thompson's groups $F$ and $V$, and we refer the reader to \cite{CFP} for a more detailed introduction to Thompson's groups, and in particular, for their definitions in terms of pairs of finite binary trees and permutations. 
\begin{definition}
Thompson's group $F$ is the group of all orientation-preserving piecewise linear homeomorphisms $g$ of the unit segment $[0,1]$ that satisfy the following two conditions.

\begin{enumerate}
    \item The homeomorphism $g$ has finitely many breakpoints and all of them are dyadic rationals.
    \item All slopes of $g$ are integer powers of $2$.
\end{enumerate}

\end{definition}

Thompson's group $F$ is finitely generated, and its standard generating set is given by the two maps defined below.

\begin{equation*}\label{eq:F_PL_gens_prelim}
 x(t)=
 \begin{cases}
  t/2,&0\leq t\leq1/2,\\
  t-1/4,&1/2\leq t\leq3/4,\\
  2t-1,&3/4\leq t\leq1,
 \end{cases}
 \qquad
 y(t)=
 \begin{cases}
  t,&0\leq t\leq1/2,\\
  \dfrac{1+x(2t-1)}2,&1/2\leq t\leq1.
 \end{cases}
\end{equation*}

Moreover, equipped with its standard finite generating set, $F$ has the following finite presentation
\begin{equation}\label{eq:presentation}
 F=\langle x,y\mid \rho_1=1,\ \rho_2=1\rangle, \text{ where }
 \rho_1=[xy^{-1},x^{-1}yx] \text{ and }
 \rho_2=[xy^{-1},x^{-2}yx^2]. 
\end{equation}

We will denote the corresponding projection from the free group on two generators $\FF$ onto $F$ by $\pi: \FF \to F$.

The commutator subgroup $F'$ of $F$ is simple, and the only proper quotients of $F$ are abelian groups, see for example \cite[Theorem~4.3]{CFP}. Thus, it is not residually finite.

We are going to state the definition of Thompson's group $V$ in terms of prefix-replacement transformations. Recall that a collection of finite binary words $A=\{\alpha_1, \ldots, \alpha_k\} \subset W$ is called a \textbf{complete prefix code} if no word in $A$ is a proper prefix of a word from $A$, and every infinite word $\omega \in \mathcal{C}$ has a prefix in $A$. We will also call an ordered list of finite binary words $(\alpha_1, \ldots, \alpha_k)$ a complete prefix code if its entries are pairwise distinct and the set $\{\alpha_1, \ldots, \alpha_k\}$ is a complete prefix code.

\begin{definition}
A homeomorphism $\varphi: \mathcal{C} \to \mathcal{C}$ is called a prefix-replacement transformation if there exist two complete prefix codes of the same length $(\alpha_1, \ldots, \alpha_k)$ and $(\beta_1, \ldots, \beta_k)$ such that
for any $w \in \mathcal{C}$ and every $i=1, \ldots, k$, we have
\[\varphi(\alpha_iw)=\beta_iw.\]
In this case, we will also say that $\varphi$ replaces $(\alpha_1, \ldots, \alpha_k)$ by $(\beta_1, \ldots, \beta_k)$.
\end{definition}

It is easy to see that any pair of two complete prefix codes of the same length defines a prefix-replacement transformation. However, one prefix-replacement transformation may be defined by multiple different pairs of complete prefix codes.  We will call a pair of complete prefix codes $A=(\alpha_1, \ldots, \alpha_k)$ and $B=(\beta_1, \ldots, \beta_k)$ reduced, if there are no words $\alpha$ and $\beta$ such that  for some $i,j$ we have $\alpha_i=\alpha0$ and $\beta_i=\beta_0$ together with $\alpha_j=\alpha1$ and $\beta_j=\beta1$.

\begin{definition}
Let $\varphi$ be a prefix-replacement transformation of $\mathcal{C}$. We will call a pair $(\alpha_1, \ldots, \alpha_k)$ and $(\beta_1, \ldots, \beta_k)$ the prefix-replacement representation of $\varphi$ if this pair defines $\varphi$ and is reduced.
\end{definition}

It is easy to see that the prefix-replacement representation of $\varphi$ is unique up to permutation of the indices. From now on, we always work only with reduced pairs of complete prefix codes.

\begin{definition}
Thompson's group $V$ is defined as the group of all prefix-replacement transformations of $\mathcal{C}$.
\end{definition}

In this interpretation, Thompson's group $F$ is the subgroup of all prefix-replacement transformations of $\mathcal{C}$ that preserve the lexicographic order on $\mathcal{C}$. In particular, the standard generator $x$ replaces the complete prefix code $(0,10,11)$ by $(00,01,1)$, and $y$ replaces $(0,10,110,111)$ by $(0,100,101,11)$.

For every word $\alpha \in W$, we denote by $\mathrm{Rist}_V(\alpha\mathcal{C})$ the subgroup of all elements of $V$ with the support in $\alpha\mathcal{C}$. This subgroup is called \textit{the rigid stabilizer} of $\alpha\mathcal{C}$. It is easy to see that $\mathrm{Rist}_V(\alpha\mathcal{C})$ is isomorphic to $V$. For example, for the homeomorphism $\hat{\alpha}: \mathcal{C} \to \alpha\mathcal{C}$ given by $\omega  \mapsto \alpha\omega$, we have $V=\hat{\alpha}^{-1}\mathrm{Rist}_V(\alpha\mathcal{C})\hat{\alpha}$.

Finally, we note that the prefix-replacement action of $V$ on $\mathcal{C}$ induces a partial action of $V$, and consequently of $F$, on the set $W$. More explicitly, if $g \in V$ has a prefix-replacement representation $(\alpha_1, \ldots, \alpha_k) \to(\beta_1, \ldots, \beta_k)$ and $w \in W$, then the partial bijection $P_g$ is not defined on $w$ if it is a proper prefix of some word $\alpha_i \in (\alpha_1, \ldots, \alpha_k)$, and otherwise $w$ has the form $w=\alpha_iw'$, where $\alpha_i$ is the unique prefix of $w$ in the list $(\alpha_1, \ldots, \alpha_k)$. In this case, we set $P_g(w)=\beta_iw'$.

\section{An action of $F$ on finite binary words}\label{sec:fw_action}
Recall that $W=\{0,1\}^*$ denotes the free monoid on two letters which we will also view as the set of all finite binary words. As we explained above, the standard action of Thompson's group $F$ on the Cantor set $\mathcal{C}$ by prefix replacement transformations induces a partial action of $F$ on $W$. For $g \in F$, we denote a corresponding partial bijection by $P_g: W \rightarrow W$.
In this section we construct a faithful action $\sigma$ of $F$ on $W$ that fixes $\emptyword$ and extends the partial action of $F$ on $W$. This means that for any $g \in F$ and any word $w \in W$ such that $P_g(w)$ is defined, we have 
\[P_g(w)=\sigma_g(w).\]

Set $\mathcal D=\mathbb Z[1/2]\cap(0,1]$, and for every word $w=w_1\cdots w_k \in W$ define
\[0.w1=\sum_{j=1}^k w_j2^{-j}+2^{-(k+1)}.\] Thus, the number $0.w1$ is the midpoint of the standard dyadic interval with the address $w$, and $\emptyword$ is sent to $0.1=1/2$. The midpoint map $m:w \mapsto 0.w1$ is the standard embedding of $W$ into $\mathbb Z[1/2]\cap(0,1)$, with the following property. If $g \in F$ replaces a prefix $\alpha$ by a prefix $\beta$ in the action of $F$ on $\mathcal{C}$, then for any binary word $u \in W$ we have: 
\[g(m(\alpha u))=g(0.\alpha u1)=0.\beta u1=m(P_g(\alpha u))\]
Next, we will slightly modify the midpoint map\footnote{Even though for every $g \in F$, $m^{-1}gm$ is a permutation of $W$ that extends $P_g$, these permutations do not fix $\emptyword$, since $m(\emptyword)=1/2$ is not a fixed point of $F$ in $(0,1]$, so we need to make a small change.} to obtain a bijection $\kappa: W \longrightarrow D$ such that for each $g \in F$ the map 
$\sigma_g=\kappa^{-1}g\kappa$ extends $P_g$ to a permutation of $W$ that fixes $\emptyword$. We define $\kappa$  explicitly by the formula
\begin{equation}\label{eq:F-kappa}
 \kappa(w)=\begin{cases}
  1,&w=\emptyword,\\
  1-2^{-k}=0.w,&w=1^k,\quad k\geq1,\\
  0.w1,&w\text{ contains a }0.
 \end{cases}
\end{equation}

It is easy to see that $\kappa$ is also a bijection and the empty word $\emptyword$ is mapped to $1$ - the fixed point of the action of $F$ on $(0,1]$.

\begin{proposition}\label{prop:F-word-action}
 The map $\sigma : F \to \Sym (W)$
\begin{equation}\label{eq:F-sigma}
  g \mapsto \sigma_g=\kappa^{-1}g\kappa\qquad 
\end{equation}
 defines a faithful action of $F$ on $W$ that fixes $\emptyword$. Moreover, for any $g \in F$ and any word $w \in W$ such that $P_g(w)$ is well-defined, we have \[P_g(w)=\sigma_g(w).\]
\end{proposition}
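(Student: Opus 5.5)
The plan is to derive the proposition from three facts: $\kappa$ is a bijection onto $\mathcal D$, $F$ acts on $\mathcal D$ by permutations, and $\kappa$ agrees with the midpoint map $m$ wherever $P_g$ is defined, apart from a controlled family of words.

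\emph{Bijectivity of $\kappa$.} Every $d\in\mathcal D\cap(0,1)$ has a unique finite binary expansion $0.d_1\cdots d_j$ with $d_j=1$, and $j\ge1$.
\begin{itemize}
\item If some $d_i$ with $i<j$ equals $0$, then $d=0.w1$ with $w=d_1\cdots d_{j-1}$ containing a $0$.
\item Otherwise $d=1-2^{-j}=0.1^j$, which is $\kappa(1^j)$.
\item Finally $1=\kappa(\emptyword)$.
\end{itemize}
Hence $\kappa\colon W\to\mathcal D$ is a bijection.

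\emph{Action and faithfulness.} Each $g\in F$ is a homeomorphism of $[0,1]$ fixing $0$ and $1$. Its breakpoints are dyadic, its slopes are powers of $2$, and its intercepts are dyadic, so $g$ and $g^{-1}$ preserve $\mathbb Z[1/2]\cap(0,1]=\mathcal D$. Thus $g\mapsto g|_{\mathcal D}$ is a homomorphism $F\to\Sym(\mathcal D)$, and conjugating by $\kappa$ gives a homomorphism $\sigma$. It is left-compatible since $\sigma_{gh}=\kappa^{-1}gh\kappa=\sigma_g\sigma_h$. It is faithful because $\mathcal D$ is dense in $[0,1]$ and elements of $F$ are continuous. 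The empty word is fixed because $\sigma_g(\emptyword)=\kappa^{-1}(g(1))=\kappa^{-1}(1)=\emptyword$.

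\emph{Extension of $P_g$.} This is the main step. Let $(\alpha_1,\dots,\alpha_k)\to(\beta_1,\dots,\beta_k)$ be the reduced representation of $g$, and let $w=\alpha_iu$ with $P_g(w)=\beta_iu$. On the interval $[0.\alpha_i,\,0.\alpha_i+2^{-|\alpha_i|}]$ the map $g$ is affine onto the corresponding interval for $\beta_i$. Since $F$ is order preserving, this means $g(0.\alpha_iv)=0.\beta_iv$ for every finite $v$, and also at the right endpoint.

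There are two cases.
\begin{itemize}
\item If $w$ contains a $0$ and $\beta_iu$ contains a $0$, then $\kappa=m$ at both words and the identity $g(m(\alpha u))=m(P_g(\alpha u))$ gives the claim.
\item The exceptional words are those of the form $1^j$. Because the codes are complete, ordered, and order preservation forces $\alpha_k=1^a$ and $\beta_k=1^b$, the words $w$ or $\beta_iu$ equal to a power of $1$ arise only when $i=k$ and $u=1^c$. In that case $w=1^{a+c}$ and $P_g(w)=1^{b+c}$. We must also check that no $\alpha_i$ with $i<k$ is a power of $1$. This holds since the last code word in lexicographic order is the only one that is all $1$'s.
\end{itemize}
In the exceptional case, $\kappa(w)=0.1^{a+c}=1-2^{-(a+c)}$. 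Near $1$ the map $g$ is $t\mapsto 1-2^{a-b}(1-t)$, so $g$ sends this point to $1-2^{-(b+c)}=\kappa(1^{b+c})$. This covers $c\ge0$, including $w=\alpha_k$ itself. (When $a=0$, $g$ is the identity near $1$; this is consistent.) Hence $\sigma_g(w)=P_g(w)$ in all cases.

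The main obstacle is the bookkeeping in this last part. One must check that mixing the two formulas for $\kappa$ never causes a word of one type to map to a word of the other type under $P_g$. The point is that $\alpha_iu$ is all $1$'s exactly when $i=k$ and $u$ is all $1$'s, and the same holds for $\beta_iu$.
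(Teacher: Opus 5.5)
Your proof is correct and follows the paper's argument. Faithfulness comes from the density of $\mathcal D$. The extension property is obtained by splitting into words containing a $0$, where $\kappa$ agrees with the midpoint map and $P_g$ preserves the presence of a $0$, and words $1^n$, handled by the affine formula for $g$ near $1$. You also verify that $\kappa$ is a bijection and that $F$ preserves $\mathcal D$, which the paper leaves implicit. Your formula $\kappa(1^{a+c})=1-2^{-(a+c)}$ fails when $a+c=0$, but that case occurs only for $g=\id$, as you note.
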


\begin{proof}

Since no non-trivial element of $F$ fixes the entire set $\mathcal{D}$, the action defined by $\sigma$ is faithful and it fixes $\emptyword$ by the definition of $\kappa$.

Next, we check that for every $g \in F$ and every $w$ such that $P_g(w)$ is well defined, we have \[\sigma_g(w)=P_g(w).\]
Suppose that in the prefix replacement representation of $F$, $g \in F$ replaces the complete prefix code $(\alpha_1 ,\ldots, \alpha_l)$ by the complete prefix code $(\beta_1, \ldots, \beta_l)$ while preserving the lexicographic order. Notice that the only prefixes in each code that do not contain $0$ are $\alpha_l$ and $\beta_l$.
If $\alpha_l=1^m$  and $\beta_l =1^h$ for some $m, h \geq 1$, then $P_g$ is not defined for any word $w$ of the form $w=1^n, \, 1 \leq n<m$. Now we consider two cases.

If $w=1^n$ for some $n \geq m$, then we have $g(0.1^n)=0.1^{n+h-m}$ in $\mathcal{D}$ and $g(1^n)=1^{n+h-m}$ in $W$, so the equality $\sigma_g(w)=P_g(w)$
holds in this case.

Next, consider the case when $w$ contains the symbol $0$ and $P_g(w)$ is well-defined. We claim that $P_g(w)$ contains $0$ as well. Indeed, if $\alpha_l=1^m$ is a prefix of $w$, then $0$ occurs in the position with index $j >m$, so replacing the prefix $\alpha_l=1^m$ of $w$ by $\beta_l=1^h$ does not affect this $0$. If $\alpha_l$ is not a prefix of $w$, then the action of $P_g$ on $w$ replaces the prefix $\alpha_i$ of $w$ by a prefix $\beta_i$ for some $1 \leq i<l$. In this case, $0$ appears in $\beta_i$, so $P_g(w)$ contains $0$ as well.

 Finally, since $w$ and $P_g(w)$ both contain $0$, we have equalities $\kappa(w)=m(w)$ and $\kappa(P_g(w))=m(P_g(w))$, so the
 equality $\sigma_g(w)=P_g(w)$ follows from the equality $m^{-1}gm(w)=P_g(w)$ that holds by the definition of $m$.
 \end{proof}

Next, we write down the explicit formulas for $\sigma_x$ and $\sigma_y$, where $x$ and $y$  are the standard generators of $F$. Recall that $x$ and $y$ are  explicitly defined by the formulas
\begin{equation}\label{eq:F-PL-generators}
 x(t)=
 \begin{cases}
  t/2,&0\leq t\leq1/2,\\
  t-1/4,&1/2\leq t\leq3/4,\\
  2t-1,&3/4\leq t\leq1,
 \end{cases}
 \qquad
 y(t)=
 \begin{cases}
  t,&0\leq t\leq1/2,\\
  \dfrac{1+x(2t-1)}2,&1/2\leq t\leq1.
 \end{cases}
\end{equation}
Then, a direct computation shows that $\sigma_x$ and $\sigma_y$ are defined as follows.

\begin{equation}\label{eq:F-sigma-x}
 \begin{aligned}
 &\sigma_x(\emptyword)=\emptyword,\qquad \sigma_x(1)=0,\\
 &\sigma_x(0u)=00u,\qquad
   \sigma_x(10u)=01u,\qquad \sigma_x(11u)=1u
   \quad(u\in W),
 \end{aligned}
\end{equation}
and
\begin{equation}\label{eq:F-sigma-y}
 \sigma_y(\emptyword)=\emptyword,\qquad
 \sigma_y(0u)=0u,\qquad
 \sigma_y(1u)=1\sigma_x(u)\quad(u\in W).
\end{equation}
Indeed, by Proposition~\ref{prop:F-word-action}, for $t \in \{x,y\}$ and any word $w \in W$ where $P_t(w)$ is well-defined, we have $\sigma_t(w)=P_t(w)$, so $\sigma_t$ simply replaces the prefix of $w$ in the same way as the
map $P_t$. For the two exceptional
words for $x$ in \eqref{eq:F-sigma-x}, $\emptyword$ and $1$, we have
\[x(\kappa(\emptyword))=x(1)=1=\kappa(\emptyword)\] and \[x(\kappa(1))=x(1/2)=1/4=\kappa(0).\]
The equalities \eqref{eq:F-sigma-y} are verified similarly. The words $\emptyword$ and $1$ are fixed because $y$ fixes
$1$ and $1/2$, and for $u\ne\emptyword$, one can use the equality
$\kappa(1u)=(1+\kappa(u))/2$in
\eqref{eq:F-PL-generators}.

Note that the word $\omega \in \FF$  evaluates to the identity element in the representation \eqref{eq:presentation} of $F$ if and only if $\omega(\sigma_x, \sigma_y) = id_W \in \Sym(W)$, and in particular, 
\begin{equation}\label{eq:F-word-relators}
 \rho_1(\sigma_x,\sigma_y)=\id_W \in \Sym(W),
 \qquad \rho_2(\sigma_x,\sigma_y)=\id_W.
\end{equation}

\section{Explicit construction of sofic approximations for $F$.}\label{sec:maps_and_approximations}

In this section we provide an explicit construction of a sequence of sofic approximations for $F$.

\subsection{Finite sets that support sofic approximations}
For $n \geq 1$, we consider the set
\[
 \Omega_n=\left\{(L,R)\mid L:[n] \rightarrow [n] \, \text{and}\, R:[n] \rightarrow [n]\right \}=[n]^{[n]}\times[n]^{[n]}.
\]
We will often call elements of $\Omega_n$ \textit{configurations}. In our subsequent arguments, we will use two convenient additional representations of elements of $\Omega_n$.

First, every configuration $s=(L,R)$ of $\Omega_n$ is associated with a unique rooted directed labeled graph $\Gamma(s)$ with edges labeled by the letters $L$ and $R$ that has the following properties. The vertex set of $\Gamma(s)$ is $[n]$, the root is at $0$, and the directed labeled edges have the form 
$ v\xrightarrow{L}L(v)$ and $ v\xrightarrow{R}R(v)$ for every $v \in [n]$.
Note that the graph $\Gamma(s)$ is allowed to have loops and parallel edges with different labels.

Moreover, every configuration $s=(L,R)$ of $\Omega_n$ can also be viewed as an $n \times 3$ table $T(L,R)$  with rows indexed by the numbers $0 \leq  i \leq n-1$. In $T(L,R)$, the row with index $i$ has entries $[ i, \,L(i), \, R(i)]$.
We simplify this notation to \[\row{i}{L(i),R(i)},\] so the row 
$\row{v}{a,b} $
in $T(L,R)$ means that $L(v)=a$ and $R(v)=b$. In the associated graph $\Gamma(s)$, this row defines
the directed edges $v\xrightarrow{L}a$ and
$v\xrightarrow{R}b$. The second interpretation will be useful for recording certain modifications of elements of $\Omega_n$.

We will denote by $\Prob_n$ the uniform probability measure on $\Omega_n$.

\subsection{Addresses of the numbers and their length.}\label{sec:addresses_and_notation}

Consider a configuration $s=(L,R) \in \Omega_n$, and let us denote by $\langle L,R\rangle$ the submonoid generated by $L$ and $R$ in the monoid of all functions from $[n]$ to itself. Let $\theta_s:\{0,1\}^* \rightarrow \langle L,R \rangle$ be the unique monoid homomorphism from $W =\{0,1\}^*$ to $\langle L,R \rangle $ that sends $0$ to $L$, $1$ to $R$.
\begin{definition}\label{def:address}
  For $a \in [n]$ we will call the \textit{address} of $a$ with respect to $s$ the smallest in the length-lexicographic order\footnote{This is a modification of the lexicographic order where the shorter word is always smaller than a longer one, and words of equal length are compared lexicographically.} word $w \in W$ such that $\theta_s(w)(0)=a$, if such a word exists. If no such word exists, we will say that the address of $a$ is undefined. If the address of $a \in [n]$ is defined, we denote it by $\mu(s,a)$.
\end{definition}
 Moreover, we define the length function $l_s$ with respect to $s$ as follows.

 \begin{definition}\label{def:length}
 For $a \in [n]$ with a well-defined address $w \in W$, we set $l_s(a)=|w|$, where $|w|$ denotes the length of the word $w$. If the address of $a$ is not defined, we set $l_s(a)=\infty$.
 \end{definition}
 
 Note that the address of $0$ is always $\emptyword$ and $l_s(0)=0$ for every $s \in \Omega_n$. Finally, for $ r\geq 0$ we define the ball of radius $r$ centered at $0$ in $s$ as the following subset of $[n]$:
\[B(s,r)=\left\{ a \in [n] \mid l_s(a) \leq r \right\}.\] 

\subsection{Regular subsets of $\Omega_n$} We will use the following special subsets of $\Omega_n$ in our subsequent arguments. 

\begin{definition}
 For $r\geq 0$,  for each $n \in \mathbb{N}$ define
\begin{equation}\label{eq:F-good}
 G_n(r)=\{s\in\Omega_n: \theta_s(w)(0),\ |w|\leq r ,
                              \text{ are pairwise distinct}\}.
\end{equation}   
\end{definition}
A configuration $s=(L,R) \in \Omega_n$ belongs to $G_n(r)$ if and only if the truncated\footnote{By truncated restriction we mean the induced subgraph on $B(s,r)$ with the outgoing edges of vertices at distance $r$ from $0$ removed} restriction of $\Gamma(s)$ to $B(s,r)$ is isomorphic to the complete directed binary rooted tree of depth $r$.

\begin{definition}
 For $s \in \Omega_n$ we will call the depth of $s$, denoted by $d(s)$, the largest integer $r\geq 0$ such that $s \in G_n(r)$.   
\end{definition}

\subsection{The maps \texorpdfstring{$X_n$}{Xn} and \texorpdfstring{$Y_n$}{Yn}}
Now we are ready to define the maps $X_n: \Omega_n \to \Omega_n$ and $Y_n: \Omega_n \to \Omega_n$ that will approximate the generators $x$ and $y$ of $F$ respectively.

\begin{definition}\label{def:X_n}
Define $X_n: \Omega_n \to \Omega_n$ as follows. Consider an arbitrary $z=(L,R)\in\Omega_n$, and set $p=R(0)$.   
\begin{enumerate}
\item If $p\ne0$, $X_n$ will act on $(L,R)$ by rewriting the rows of the table $T(L,R)$ indexed by $0$ and $p$ as follows:
\begin{equation}\label{eq:X_n}
 \bigl(\row{0}{a,p},\ \row{p}{b,c}\bigr)
 \ \xmapsto{\qquad}\
 \bigl(\row{0}{p,c},\ \row{p}{a,b}\bigr),
\end{equation}
while the rest of the rows of $T(L,R)$ are unchanged. It is easy to see that we still get an admissible table that corresponds to some $(L',R') \in \Omega_n$.
The corresponding operation with $\Gamma(z)$ is illustrated by Figure~\ref{fig:x}. We will refer to this case as the \textit{regular case}.

\item If $p=0$, $X_n$ will simply interchange the two entries in the row indexed by $0$:
\begin{equation}\label{eq:X-exception}
 \row{0}{a,0}\longmapsto\row{0}{0,a},
\end{equation}
and all other rows remain unchanged. The corresponding transformation of $\Gamma(z)$ is illustrated by Item (a) in Figure~\ref{fig:exceptional}. We will refer to this case as the \textit{exceptional case}.
Note that if $L(0)=R(0)=0$, the configuration $z=(L,R)$ is unchanged.
\end{enumerate}
\end{definition}

\begin{remark}\label{rem:Xn_and_sigma_x}
  Note that for any $z \in \Omega_n$ and any $t \in [n]$, an application of $X_n$ to $z$ changes $L(t)$ or $R(t)$ only at the points $t$ with $l_z(t) \leq 2$. Moreover, if $d(z) \geq 2$ and $z'=X_n(z)$, we are in the regular case of the definition of $X_n$, and for $t \in \{0, p, a,b,c\}$ its address changes by an application of $\sigma_x$ defined in Section~\ref{sec:fw_action}, or more explicitly:
\[\mu(z',t)=\sigma_x(\mu(z,t)).\]
\end{remark}

\begin{definition}\label{def:Y_n}
To define the map $Y_n$, for any $z=(L,R) \in \Omega_n$, we examine the values of $R(0)$ and $R^2(0)$ and consider the following cases.
\begin{enumerate}
    \item If $R(0)=0$, set $Y_n(z)=z$.
    \item If $R(0)\ne0$, set $r=R(0)$, and consider $p=R(r)=R^2(0)$. If we have $p\notin\{0,r\}$, then $Y_n$ changes the rows of $T(L,R)$ indexed by $0$, $r$, and $p$ as follows
    \begin{equation}\label{eq:Y_n}
 \bigl(\row{0}{s,r},\ \row{r}{a,p}, \ p:(b,c)\bigr)
 \ \xmapsto{\qquad}\
 \bigl(\row{0}{s,r},\ \row{r}{p,c},\ \row{p}{a,b}\bigr).
 \end{equation} 
  The corresponding modification of $\Gamma(z)$ is described in Figure~\ref{fig:y}.
  If $p \in \{0,r\} $, we proceed to the next case.
  \item If $r=R(0)\ne0$, but $p=R^2(0) \in\{0,r\}$, then $Y_n$ will interchange the two entries in the row labeled by $r$:
\begin{equation}\label{eq:Y-exception}
 \row{r}{a,p}\longmapsto\row{r}{p,a}.
 \end{equation}
 The corresponding transformation of $\Gamma(z)$ is illustrated by Items $(b)$ and $(c)$ in Figure~\ref{fig:exceptional}. 
\end{enumerate}
Note that the rows that are not mentioned always stay unchanged. We will also refer to cases $(1)$ and $(3)$ as the \textit{exceptional cases} for $Y_n$.
\end{definition}

\begin{remark}\label{rem:Yn_and_sigma_y}
  Similarly to $X_n$, for any $z \in \Omega_n$ and any $t \in [n]$, an application of $Y_n$ to $z$ changes $L(t)$ or $R(t)$ only at the points $t$ with $l_z(t) \leq 2$. Moreover, if $d(z) \geq 4$ and $z'=Y_n(z)$, we are in the regular case of the definition of $Y_n$, and for $t \in \{0,r, p, s,a,b,c\}$ its address changes by an application of $\sigma_y$ defined in Section~\ref{sec:fw_action}, or more explicitly:
\[\mu(z',t)=\sigma_y(\mu(z,t)).\]
\end{remark}

Figures~\ref{fig:x}--\ref{fig:exceptional} illustrate the definitions of $X_n$ and $Y_n$. In each figure the relevant part of $\Gamma(z)$ is on the left, and its modified version is on the right.

\tikzset{
  ptr vertex/.style={circle,draw,minimum size=5.5mm,inner sep=1pt,
    font=\small},
  ptr changed/.style={ptr vertex,fill=black!9},
  ptr root/.style={ptr vertex,double,double distance=0.65pt},
  ptr root changed/.style={ptr root,fill=black!9},
  ptr L/.style={-{Stealth[length=2mm]},draw=blue!65!black,
    line width=0.65pt},
  ptr R/.style={-{Stealth[length=2mm]},draw=red!65!black,
    line width=0.65pt,densely dashed},
  ptr label/.style={font=\footnotesize,fill=white,inner sep=1.1pt},
  ptr operation/.style={-{Stealth[length=2.5mm]},line width=0.8pt}
}

\begin{figure}[htbp]
\centering
\begin{tikzpicture}[x=1.12cm,y=1.05cm]
  \begin{scope}
    \node[ptr root changed] (x0) at (0,2) {$0$};
    \node[ptr vertex] (xa) at (-1,1) {$a$};
    \node[ptr changed] (xp) at (1,1) {$p$};
    \node[ptr vertex] (xb) at (0.4,0) {$b$};
    \node[ptr vertex] (xc) at (1.6,0) {$c$};
    \draw[ptr L] (x0) -- node[ptr label,above left] {$L$} (xa);
    \draw[ptr R] (x0) -- node[ptr label,above right] {$R$} (xp);
    \draw[ptr L] (xp) -- node[ptr label,left] {$L$} (xb);
    \draw[ptr R] (xp) -- node[ptr label,right] {$R$} (xc);
  \end{scope}
  \draw[ptr operation] (2.0,1.1) -- node[above,font=\small] {$X_n$} (3.1,1.1);
  \begin{scope}[xshift=5.5cm]
    \node[ptr root changed] (z0) at (0,2) {$0$};
    \node[ptr changed] (zp) at (-1,1) {$p$};
    \node[ptr vertex] (zc) at (1,1) {$c$};
    \node[ptr vertex] (za) at (-1.6,0) {$a$};
    \node[ptr vertex] (zb) at (-0.4,0) {$b$};
    \draw[ptr L] (z0) -- node[ptr label,above left] {$L$} (zp);
    \draw[ptr R] (z0) -- node[ptr label,above right] {$R$} (zc);
    \draw[ptr L] (zp) -- node[ptr label,left] {$L$} (za);
    \draw[ptr R] (zp) -- node[ptr label,right] {$R$} (zb);
  \end{scope}
\end{tikzpicture}
\caption{Regular case of the definition of $X_n$, with $p=R(0)\ne0$. The graph $\Gamma(z)$ is on the left and the graph $\Gamma(X_n(z))$ is on the right.
The vertices whose images under $L$ or $R$ might change are shaded, and the double circle marks the distinguished root. Outgoing edges that are not changing are omitted.
Our only restriction on the repetitions in the list $a,b,c,0,p$ is $p\ne0$, so for example, it is possible to have $a=c$. We also have no restrictions on other outgoing edges.}
\label{fig:x}
\end{figure}

\begin{figure}[htbp]
\centering
\begin{tikzpicture}[x=1.02cm,y=0.92cm]
  \begin{scope}
    \node[ptr root] (y0) at (0,3) {$0$};
    \node[ptr vertex] (ys) at (-1.1,2) {$s$};
    \node[ptr changed] (yr) at (1.1,2) {$r$};
    \node[ptr vertex] (ya) at (0.35,1) {$a$};
    \node[ptr changed] (yp) at (1.85,1) {$p$};
    \node[ptr vertex] (yb) at (1.25,0) {$b$};
    \node[ptr vertex] (yc) at (2.45,0) {$c$};
    \draw[ptr L] (y0) -- node[ptr label,above left] {$L$} (ys);
    \draw[ptr R] (y0) -- node[ptr label,above right] {$R$} (yr);
    \draw[ptr L] (yr) -- node[ptr label,left] {$L$} (ya);
    \draw[ptr R] (yr) -- node[ptr label,right] {$R$} (yp);
    \draw[ptr L] (yp) -- node[ptr label,left] {$L$} (yb);
    \draw[ptr R] (yp) -- node[ptr label,right] {$R$} (yc);
  \end{scope}
  \draw[ptr operation] (2.8,1.6) -- node[above,font=\small] {$Y_n$} (3.9,1.6);
  \begin{scope}[xshift=5.5cm]
    \node[ptr root] (w0) at (0,3) {$0$};
    \node[ptr vertex] (ws) at (-1.1,2) {$s$};
    \node[ptr changed] (wr) at (1.1,2) {$r$};
    \node[ptr changed] (wp) at (0.35,1) {$p$};
    \node[ptr vertex] (wc) at (1.85,1) {$c$};
    \node[ptr vertex] (wa) at (-0.25,0) {$a$};
    \node[ptr vertex] (wb) at (0.95,0) {$b$};
    \draw[ptr L] (w0) -- node[ptr label,above left] {$L$} (ws);
    \draw[ptr R] (w0) -- node[ptr label,above right] {$R$} (wr);
    \draw[ptr L] (wr) -- node[ptr label,left] {$L$} (wp);
    \draw[ptr R] (wr) -- node[ptr label,right] {$R$} (wc);
    \draw[ptr L] (wp) -- node[ptr label,left] {$L$} (wa);
    \draw[ptr R] (wp) -- node[ptr label,right] {$R$} (wb);
  \end{scope}
\end{tikzpicture}
\caption{Regular case of the definition of $Y_n$, with
$r=R(0)\ne0$ and $p=R(r)\notin\{0,r\}$.
The graph $\Gamma(z)$ is on the left and the graph $\Gamma(Y_n(z))$ is on the right.
The vertices whose images under $L$ or $R$ might change are shaded, and the double circle marks the distinguished root. Outgoing edges that are not changing are omitted.
We also do not impose any additional restrictions on the values of $s,a,b$, and $c$, and have no restrictions on other outgoing edges.}
\label{fig:y}
\end{figure}

\begin{figure}[htbp]
\centering
\textup{(a)}\quad $X_n$ when $R(0)=0$: $0:(a,0)\longmapsto0:(0,a)$.
\par\smallskip
\begin{tikzpicture}[x=1cm,y=1cm]
  \node[ptr root changed] (e0) at (0,0) {$0$};
  \node[ptr vertex] (ea) at (-1.7,0) {$a$};
  \draw[ptr L] (e0) -- node[ptr label,above] {$L$} (ea);
  \path[ptr R] (e0) edge[loop right,min distance=8mm]
    node[ptr label,right] {$R$} (e0);
  \draw[ptr operation] (1.5,0) -- node[above,font=\small] {$X_n$} (2.7,0);
  \node[ptr root changed] (f0) at (5.0,0) {$0$};
  \node[ptr vertex] (fa) at (3.3,0) {$a$};
  \draw[ptr R] (f0) -- node[ptr label,above] {$R$} (fa);
  \path[ptr L] (f0) edge[loop right,min distance=8mm]
    node[ptr label,right] {$L$} (f0);
\end{tikzpicture}
\par\medskip
\textup{(b)}\quad $Y_n$ when $r=R(0)\ne0$ and $R(r)=0$.
\par\smallskip
\begin{tikzpicture}[x=1cm,y=1cm]
  \node[ptr root] (g0) at (0,1.35) {$0$};
  \node[ptr vertex] (gs) at (-1.7,1.35) {$s$};
  \node[ptr changed] (gr) at (0,0) {$r$};
  \node[ptr vertex] (ga) at (-1.7,0) {$a$};
  \draw[ptr L] (g0) -- node[ptr label,above] {$L$} (gs);
  \path[ptr R] (g0) edge[bend left=22]
    node[ptr label,right] {$R$} (gr);
  \path[ptr R] (gr) edge[bend left=22]
    node[ptr label,left] {$R$} (g0);
  \draw[ptr L] (gr) -- node[ptr label,above] {$L$} (ga);
  \draw[ptr operation] (1.5,0.65) -- node[above,font=\small] {$Y_n$} (2.7,0.65);
  \node[ptr root] (h0) at (5.0,1.35) {$0$};
  \node[ptr vertex] (hs) at (3.3,1.35) {$s$};
  \node[ptr changed] (hr) at (5.0,0) {$r$};
  \node[ptr vertex] (ha) at (3.3,0) {$a$};
  \draw[ptr L] (h0) -- node[ptr label,above] {$L$} (hs);
  \path[ptr R] (h0) edge[bend left=22]
    node[ptr label,right] {$R$} (hr);
  \path[ptr L] (hr) edge[bend left=22]
    node[ptr label,left] {$L$} (h0);
  \draw[ptr R] (hr) -- node[ptr label,above] {$R$} (ha);
\end{tikzpicture}
\par\medskip
\textup{(c)}\quad $Y_n$ when $r=R(0)\ne0$ and $R(r)=r$.
\par\smallskip
\begin{tikzpicture}[x=1cm,y=1cm]
  \node[ptr root] (i0) at (0,1.35) {$0$};
  \node[ptr vertex] (is) at (-1.7,1.35) {$s$};
  \node[ptr changed] (ir) at (0,0) {$r$};
  \node[ptr vertex] (ia) at (-1.7,0) {$a$};
  \draw[ptr L] (i0) -- node[ptr label,above] {$L$} (is);
  \draw[ptr R] (i0) -- node[ptr label,right] {$R$} (ir);
  \draw[ptr L] (ir) -- node[ptr label,above] {$L$} (ia);
  \path[ptr R] (ir) edge[loop right,min distance=8mm]
    node[ptr label,right] {$R$} (ir);
  \draw[ptr operation] (1.5,0.65) -- node[above,font=\small] {$Y_n$} (2.7,0.65);
  \node[ptr root] (j0) at (5.0,1.35) {$0$};
  \node[ptr vertex] (js) at (3.3,1.35) {$s$};
  \node[ptr changed] (jr) at (5.0,0) {$r$};
  \node[ptr vertex] (ja) at (3.3,0) {$a$};
  \draw[ptr L] (j0) -- node[ptr label,above] {$L$} (js);
  \draw[ptr R] (j0) -- node[ptr label,right] {$R$} (jr);
  \draw[ptr R] (jr) -- node[ptr label,above] {$R$} (ja);
  \path[ptr L] (jr) edge[loop right,min distance=8mm]
    node[ptr label,right] {$L$} (jr);
\end{tikzpicture}
\caption{The exceptional case for $X_n$ is covered by Item $(a)$, and the non-trivial exceptional cases for $Y_n$ are described by Items $(b)$ and $(c)$.}
\label{fig:exceptional}
\end{figure}

\FloatBarrier

\begin{lemma}\label{lem:bijections}
The maps \(X_n\) and \(Y_n\) are permutations of \(\Omega_n\).
\end{lemma}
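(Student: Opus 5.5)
The plan is to show that $X_n$ and $Y_n$ are bijections of the finite set $\Omega_n$ by writing down explicit two-sided inverses. The key point is that each case in Definitions~\ref{def:X_n} and~\ref{def:Y_n} can be recognized from the output configuration, so the case split can be undone. Throughout, the maps are viewed as rewritings of rows of the table $T(L,R)$. Coincidences among the letters $a,b,c,s$ and the indices $0,p,r$ cause no trouble, since only the row indices must be distinct, and they are in every case considered.

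For $X_n$, compare the value $L'(0)$ in $z'=X_n(z)=(L',R')$ across the two cases.
\begin{itemize}
\item In the regular case $R(0)=p\neq 0$, the new row at $0$ is $\row{0}{p,c}$, so $L'(0)=p\neq 0$.
\item In the exceptional case the new row is $\row{0}{0,a}$, so $L'(0)=0$.
\end{itemize}
Define $X_n^{*}$ on $(L',R')$ as follows. If $q=L'(0)\neq0$, rewrite
\[
\bigl(\row{0}{q,c},\ \row{q}{a,b}\bigr)\longmapsto\bigl(\row{0}{a,q},\ \row{q}{b,c}\bigr).
\]
If $L'(0)=0$, rewrite $\row{0}{0,a}\mapsto\row{0}{a,0}$. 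Now check the two compositions.
\begin{itemize}
\item $X_n^{*}X_n=\id$: the output of each case of $X_n$ falls into the matching case of $X_n^{*}$ (with $q=p$), and the row rewritings are visibly mutually inverse.
\item $X_n X_n^{*}=\id$: the regular branch of $X_n^{*}$ produces $R(0)=q\neq0$, and the exceptional branch produces $R(0)=0$. So $X_n$ applied afterwards lands in the matching case and undoes the rewriting.
\end{itemize}

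For $Y_n$, first observe that $R(0)$ never changes: row $0$ is untouched in every case. Hence $r=R(0)$ can be read off from the output. Case (1), $R(0)=0$, is the identity and is preserved. When $r\neq0$, the value $L'(r)$ distinguishes the other two cases.
\begin{itemize}
\item In the regular case the new row at $r$ is $\row{r}{p,c}$, so $L'(r)=p\notin\{0,r\}$.
\item In case (3) the new row is $\row{r}{p,a}$, so $L'(r)=p\in\{0,r\}$.
\end{itemize}
Define $Y_n^{*}$ on $(L',R')$ as follows. If $R'(0)=0$, it is the identity. Otherwise set $r=R'(0)$ and $q=L'(r)$. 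If $q\notin\{0,r\}$, rewrite
\[
\bigl(\row{r}{q,c},\ \row{q}{a,b}\bigr)\longmapsto\bigl(\row{r}{a,q},\ \row{q}{b,c}\bigr),
\]
leaving row $0$ alone. If $q\in\{0,r\}$, swap the entries of row $r$.

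As for $X_n$, check that the branches match in both directions. The regular branch of $Y_n^{*}$ yields $R(0)=r\neq0$ and $R(r)=q\notin\{0,r\}$. The swap branch yields $R(r)=q\in\{0,r\}$. In each branch the rewritings are mutually inverse, so $Y_n^{*}Y_n=Y_nY_n^{*}=\id$.

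The only delicate step is this bookkeeping of which case the output lands in. In particular, one must confirm that the rows being rewritten always have distinct indices: $0\neq p$ for $X_n$, and $r\neq p$ in the regular case of $Y_n$. Otherwise a rewriting would not be well defined or invertible. Once that is checked, $X_n^{*}=X_n^{-1}$ and $Y_n^{*}=Y_n^{-1}$, which proves the lemma.
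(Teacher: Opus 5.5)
Your proof is correct. It rests on the same observation as the paper's proof: the branch of the definition that was applied can be read off from the output, via $L'(0)$ for $X_n$, and via $R'(0)$ together with $L'(R'(0))$ for $Y_n$.

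The two routes differ in how they use this observation. The paper only proves injectivity. It compares two inputs $z_1\neq z_2$ case by case, notes that within a fixed branch the rewriting permutes the entries of the same rows and so preserves any difference, and then invokes finiteness of $\Omega_n$. You instead write down explicit maps $X_n^{*}$ and $Y_n^{*}$ and verify both compositions.

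Your route costs a little more bookkeeping, since you must check that each branch of $X_n^{*}$ and $Y_n^{*}$ lands in the matching branch of $X_n$ and $Y_n$. You do this correctly, including the point that row $0$ is never among the rewritten rows in the regular branch of $Y_n^{*}$ because $r,q\neq 0$.

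In return, your route does not use $|\Omega_n|<\infty$, and it produces explicit formulas for $X_n^{-1}$ and $Y_n^{-1}$, including what their regular cases are ($L(0)\neq0$, respectively $R(0)\neq 0$ and $L(R(0))\notin\{0,R(0)\}$). These formulas justify the paper's later unproved remark that $t_n^{-1}$ acts on $t_n(z)$ by reading Figures~\ref{fig:x} and~\ref{fig:y} from right to left. That remark is used throughout the subsequent lemmas about leaves and internal vertices of $t_n^{-1}$.
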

\begin{proof}
Recall from Definition~\ref{def:X_n} that, for $u=(L,R)\in\Omega_n$
and $p=R(0)$, the map $X_n:\Omega_n\to\Omega_n$ acts as follows.
If $p\ne0$, it rewrites the rows indexed by $0$ and $p$ by
\begin{equation*}
 \bigl(\row{0}{a,p},\ \row{p}{b,c}\bigr)
 \longmapsto
 \bigl(\row{0}{p,c},\ \row{p}{a,b}\bigr),
 \tag{4.2}
\end{equation*}
where $a=L(0)$, $b=L(p)$, and $c=R(p)$.
If $p=0$, it interchanges the entries of row $0$:
\begin{equation*}
 \row{0}{a,0}\longmapsto\row{0}{0,a}.
 \tag{4.3}
\end{equation*}
All other rows remain unchanged.

Similarly, by Definition~\ref{def:Y_n}, the map
$Y_n:\Omega_n\to\Omega_n$ fixes $u$ if $R(0)=0$.
Otherwise, put $r=R(0)$ and $p=R(r)$.
If $p\notin\{0,r\}$, the rule is
\begin{equation*}
 \bigl(\row{0}{s,r},\ \row{r}{a,p},\ \row{p}{b,c}\bigr)
 \longmapsto
 \bigl(\row{0}{s,r},\ \row{r}{p,c},\ \row{p}{a,b}\bigr),
 \tag{4.4}
\end{equation*}
where $s=L(0)$, $a=L(r)$, $b=L(p)$, and $c=R(p)$.
If $p\in\{0,r\}$, it interchanges the entries of row $r$:
\begin{equation*}
 \row{r}{a,p}\longmapsto\row{r}{p,a}.
 \tag{4.5}
\end{equation*}
Rows not mentioned remain unchanged.

Since $\Omega_n$ is finite, it suffices to prove that $X_n$ and
$Y_n$ are injective. Let $z_1=(L_1,R_1)$ and $z_2=(L_2,R_2)$
be two distinct elements of $\Omega_n$.

\medskip
\noindent\textbf{The $X_n$ case.}
Put $p_i=R_i(0)$ and write $X_n(z_i)=(L_i',R_i')$ for $i=1,2$.
In both branches, $L_i'(0)=p_i$. We consider three cases.
\begin{enumerate}
\item $p_1\ne p_2=0$, after interchanging the inputs if necessary.
Then $L_1'(0)=p_1\ne0=L_2'(0)$, so $X_n(z_1)\ne X_n(z_2)$.

\item $p_1=p_2=0$.
For both inputs, $X_n$ interchanges the entries of row $0$ and leaves all other
rows unchanged. A difference in $L_1(0),L_2(0)$ becomes a difference
in $R_1'(0),R_2'(0)$, and a difference in any other row persists.
Thus $X_n(z_1)\ne X_n(z_2)$.

\item $p_1\ne0$ and $p_2\ne0$.
If $p_1\ne p_2$, then $L_1'(0)\ne L_2'(0)$, so the outputs are
distinct. If $p_1=p_2=p$, the regular rule permutes the four entries
in rows $0$ and $p$ in the same way for both configurations, leaving
every other entry unchanged. Thus, as in case~(2), it preserves any
difference between the inputs, and $X_n(z_1)\ne X_n(z_2)$.
\end{enumerate}

\medskip
\noindent\textbf{The $Y_n$ case.}
Put $r_i=R_i(0)$ and write $Y_n(z_i)=(L_i',R_i')$ for $i=1,2$.
The root row is unchanged, so $R_i'(0)=r_i$.
\begin{enumerate}
\item $r_1\ne r_2=0$, after interchanging the inputs if necessary.
Then $R_1'(0)=r_1\ne0=R_2'(0)$, so $Y_n(z_1)\ne Y_n(z_2)$.

\item $r_1=r_2=0$.
Both inputs are fixed, so $Y_n(z_1)=z_1\ne z_2=Y_n(z_2)$.

\item $r_1\ne0$ and $r_2\ne0$.
If $r_1\ne r_2$, the outputs differ in their right root pointers.
Otherwise, write $r_1=r_2=r$ and put $p_i=R_i(r)$.
Since $L_i'(r)=p_i$, distinct $p_1,p_2$ give distinct outputs.
If $p_1=p_2=p\in\{0,r\}$, both inputs undergo the same swap in
row $r$, with all other rows unchanged, so any difference persists.
Finally, if $p_1=p_2=p\notin\{0,r\}$, the argument in case~(3)
for $X_n$ applies with $r$ in place of $0$.
Thus $Y_n(z_1)\ne Y_n(z_2)$ in every subcase.
\end{enumerate}
Both maps are therefore injective and hence are permutations of $\Omega_n$.
\end{proof}

It is easy to see that if $z \in \Omega_n$ belongs to the regular case for some $t \in \{X_n ,Y_n\}$ and $z'=t(z)$, then the modification of $\Gamma(z')$ under the action of $t^{-1}$ is described by switching the order of trees in Figure~\ref{fig:x} or Figure~\ref{fig:y} depending on the value of $t$. We will also say that $z'$  belongs to the regular case for $t^{-1}$ if $z$ belongs to the regular case of $t$.

\subsection{Partial action of $\langle X_n, Y_n \rangle$ on $G_n(r)$}

We will use the following conventions throughout this section. For $t$ in $\{x^{\pm1},y^{\pm1}\}$ we denote by $t_n$ the corresponding permutation in $\{X_n, Y_n, X_n^{-1}, Y_n^{-1}\}$. Note that for any $z \in G_n(3)$ and any permutation
$t_n \in \{X_n, Y_n, X_n^{-1}, Y_n^{-1}\}$, the configuration $z$ belongs to the regular case of the definition of $t_n$, and the configuration $t_n(z)$ belongs to the regular case for $t_n^{-1}$. If $t_n =X_n^{\pm 1}$, we will retain the notation from Figure \ref{fig:x}, and we will call $a,b$, and $c$ \textbf{the leaves of} $t_n$ in $z$, and the numbers $0$ and $p$ will be called \textbf{the internal vertices} of $t_n$ in $z$. Similarly, if $t_n=Y_n^{\pm 1}$ we will adhere to the notation of Figure \ref{fig:y} and the numbers $s,a,b$, and $c$ will be called \textbf{the leaves of} $t_n$, while the numbers $0$, $r$, and $p$ will be called \textbf{the internal vertices of} $t_n$ in $z$. 

Finally, for any $z \in \Omega_n$, by a directed path $\gamma$  of length $k$ in $\Gamma(z)$ we mean a sequence of directed edges $\gamma=(e_1, \ldots, e_k)$  such that for $i=1, \ldots, k-1$ the end vertex of $e_i$ is equal to the initial vertex of $e_{i+1}$.  A directed path $\gamma$ of length $k$ in $\Gamma(z)$ is uniquely determined by its starting vertex and the sequence of length $k$ of symbols from $\{L,R\}$ that label its edges. Thus, for any $z \in \Omega_n$, the directed paths in $\Gamma(z)$ that start at $0$ are in one-to-one correspondence with the binary words over the alphabet $\{L,R\}$. Moreover, we can naturally identify them with the words in $W=\{0,1\}^*$ by coding $L$ by $0$ and $R$ by $1$. For a path $\gamma$ starting at $0$, we will denote the corresponding word in $W=\{0,1\}^*$ by $\mu(\gamma)$.

\begin{lemma}\label{lem:xy_on_good_paths}
Consider $z \in G_n(4)$ and let $t_n$ be any permutation in $\{X_n, Y_n, X_n^{-1}, Y_n^{-1}\}$. Let us also denote $z'=t_n(z)$. Consider a path $\gamma$ in $\Gamma(z)$ that starts at $0$, visits the set of leaves of $t_n$ in $z$, and never returns to the set of internal vertices of $t_n$ in $z$ afterwards. Suppose that $h$ is the first leaf of $t_n$ visited by $\gamma$. Then the following statements hold.
\begin{enumerate}
    \item The coding $\mu(\gamma)$ has the form \[\mu(\gamma)=\mu(z, h)\beta,\] for some $\beta \in W$.
    \item Consider the path in $\Gamma(z')$ starting at $0$ and coded by the word \[\sigma_t(\mu(z,h))\beta=\mu(z',h)\beta.\] Then, after reaching $h$, this path visits exactly the same numbers from $[n]$ as $\gamma$ in the same order as $\gamma$.
\end{enumerate}
 
\end{lemma}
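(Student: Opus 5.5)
The plan is to treat the four permutations $t_n\in\{X_n,Y_n,X_n^{-1},Y_n^{-1}\}$ case by case, using the explicit pictures in Figure~\ref{fig:x} and Figure~\ref{fig:y}. Since $z\in G_n(4)$, $z$ is in the regular case for $t_n$. The internal vertices and leaves of $t_n$ in $z$ all lie at depth at most $3$, so they are pairwise distinct vertices of the complete binary tree of depth $4$ rooted at $0$. In particular, the addresses of these vertices with respect to $z$ are the obvious words of length at most $3$, read off the figure.

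For $t_n=X_n$ these addresses are $0\mapsto\emptyword$, $p\mapsto 1$, $a\mapsto 0$, $b\mapsto 10$, $c\mapsto 11$. For $t_n=Y_n$ the leaves $s,a,b,c$ have addresses $0,10,110,111$. For the inverses, the configuration on the right of the figure plays the role of $z$. Here I must check that $t_n^{-1}$ acting on $z$ in the regular case is exactly the reversed picture, with leaves $a,b,c$ at $00,01,1$, respectively $s,a,b,c$ at $0,100,101,11$. This holds because, as remarked before the lemma, $t_n^{-1}(z)$ falls in the regular case of $t_n$, and the roles of the two sides swap.

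\emph{Part (1).} The internal vertices sit exactly at the internal nodes of the finite subtree spanned by the leaf codes, and those leaf codes form a complete prefix code. Take the portion of $\gamma$ up to its first visit to a leaf. That portion starts at $0$ and, before reaching $h$, travels only through internal vertices, since any edge leaving an internal vertex goes to an internal vertex or a leaf. By injectivity of the tree of depth $4$, the prefix of $\mu(\gamma)$ read so far is precisely the tree address of $h$. This address equals $\mu(z,h)$, because in a configuration of $G_n(4)$ the unique word of length at most $4$ reaching a vertex is its address. This gives $\mu(\gamma)=\mu(z,h)\beta$.

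\emph{Part (2).} I first verify that $\mu(z',h)=\sigma_t(\mu(z,h))$ for each leaf $h$. This is Remark~\ref{rem:Xn_and_sigma_x} and Remark~\ref{rem:Yn_and_sigma_y}, and for the inverses it is the same remark applied to $t_n^{-1}$. It amounts to comparing the prefix codes $(0,10,11)\to(00,01,1)$, etc., with the formulas \eqref{eq:F-sigma-x}–\eqref{eq:F-sigma-y}. In $\Gamma(z')$ the path coded by $\mu(z',h)$ passes only through the new internal vertices and ends at $h$. After that, $\gamma$ follows $\beta$ from $h$ and by hypothesis never enters an internal vertex of $t_n$ again. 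The permutation $t_n$ modifies only the rows of internal vertices, so every edge used by $\gamma$ after $h$ is an edge of $\Gamma(z')$ with the same label and endpoints. Hence the path coded by $\mu(z',h)\beta$ in $\Gamma(z')$ visits the same vertices after $h$, in the same order.

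The main obstacle is bookkeeping rather than ideas. For the inverse maps I must confirm that the set of internal vertices and the set of leaves are the same for $z$ and $z'$, with only the tree shape changing. I also need that the hypothesis ``never returns to internal vertices'' refers to this common set, so that the row-unchanged argument applies verbatim. I would handle this by noting that both sides of each figure share the same vertex labels in the internal and leaf roles.
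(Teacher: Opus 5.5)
Your proposal is correct and follows essentially the same route as the paper. Part (1) comes from the tree structure of the internal vertices and leaves together with uniqueness of short codes in $G_n(4)$. Part (2) comes from the identity $\mu(z',h)=\sigma_t(\mu(z,h))$, given by Remarks~\ref{rem:Xn_and_sigma_x} and \ref{rem:Yn_and_sigma_y} and their inverse versions, together with the fact that $t_n$ only rewrites the rows of internal vertices.

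One small caution: for $Y_n^{-1}$, that identity is not just a comparison of prefix codes. You must also check that the new tree codes are length-lexicographically minimal in $z'$. This is exactly where $z\in G_n(4)$ is needed: it rules out an edge from $a$ to $b$ in $\Gamma(z)$, which would give $b$ an address smaller than $110$ in $z'$.
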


\begin{proof}
Note that since $z \in G_n(4)$, it is covered by the regular case of the definition for every $t_n \in \{X_n,Y_n,X_n^{-1},Y_n^{-1}\}$.

Item (1) is obvious - a directed path $\gamma$ starting at $0$ leaves the internal vertices of $t_n$ in $z$ for the first time only by moving along a directed path of minimal length that connects $0$ with one of the leaves of $t_n$ in $z$. This leaf is $h$, and the corresponding minimal path is the address of $h$, because $d(z) \geq 3$.

To prove Item $(2)$, first note that for $t_n=X_n^{\pm1}$, we have $\mu(z',h)=\sigma_t(\mu(z,h))$ by  Remark~\ref{rem:Xn_and_sigma_x}, and for $t_n=Y_n^{\pm1}$, we get $\mu(z',h)=\sigma_t(\mu(z,h))$ by Remark~\ref{rem:Yn_and_sigma_y} and its adaptation\footnote{A careful inspection shows that we need the assumption $z \in G_n(4)$ in this specific case to prevent an issue with addresses when there is an edge from $a$ to $b$.} for $Y_n^{-1}$. If $z=(L,R)$ and $z'=(L',R')$, then for any $m  \in [n]$ we can have $L(m)\ne L'(m)$ or $R(m) \ne R'(m)$ only if $m$ is one of the internal vertices of $t_n$ in $z$. In particular, $L(h)=L'(h)$ and $R(h)=R'(h)$. Since by our assumption $\gamma$ never visits an internal vertex after reaching $h$ for the first time, we have $L(m)=L'(m)$ and $R(m)=R'(m)$ for every number $m$ visited by $\gamma$ after $h$. Thus, the path coded by $\mu(z',h)\beta$ in $\Gamma(z')$, after reaching $h$ for the first time, visits exactly the same numbers from $[n]$ as $\gamma$ in the same order as $\gamma$.
\end{proof}

\begin{lemma}\label{lem:regularity persists}
Let $t_n$ be any permutation in $\{X_n, Y_n, X_n^{-1}, Y_n^{-1}\}$. Suppose that $z \in \Omega_n$ satisfies the inequality $d(z) \geq 5$ and set $z'=t_n(z)$. Then we have \[d(z') \geq d(z)-1.\]
\end{lemma}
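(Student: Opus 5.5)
The plan is to set $D=d(z)\geq 5$ and show that $z'\in G_n(D-1)$. Equivalently, the $2^{D}-1$ endpoints $\theta_{z'}(w)(0)$ with $|w|\leq D-1$ should be pairwise distinct. I will describe $\Gamma(z')$ near the root directly through $\Gamma(z)$, using Lemma~\ref{lem:xy_on_good_paths}.

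Since $z\in G_n(D)$, the truncated ball $B(z,D)$ is a complete binary tree $T$ of depth $D$ with root $0$. The internal vertices of $t_n$ in $z$ have addresses of length at most $2$. The leaves of $t_n$ in $z$ have addresses that form a complete prefix code of length at most $3$. For $X_n^{\pm1}$ this code is $\{0,10,11\}$ or $\{00,01,1\}$. For $Y_n^{\pm1}$ it is $\{0,10,110,111\}$ or $\{0,100,101,11\}$.

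Take any word $v$ with $|v|\leq D-1$, read as a path in $\Gamma(z')$ from $0$. There are two cases.
\begin{enumerate}
\item If $v$ is a proper prefix of a leaf address of $t_n^{-1}$ in $z'$, then its endpoint is an internal vertex. These endpoints are pairwise distinct and distinct from the leaves, by the explicit local pictures in Figures~\ref{fig:x}--\ref{fig:y}.
\item Otherwise write $v=\beta'\beta$, where $\beta'=\mu(z',h)=\sigma_t(\mu(z,h))$ for a leaf $h$. Consider the path in $\Gamma(z)$ coded by $\alpha\beta$ with $\alpha=\mu(z,h)$. Since $|\alpha|\leq|\beta'|+1$ for these generators, we get $|\alpha\beta|\leq D$. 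Hence this path stays inside the tree $T$ and never returns to the internal vertices, which sit at depth at most $2$. By Lemma~\ref{lem:xy_on_good_paths}(2), the endpoint of $v$ in $\Gamma(z')$ equals the endpoint of $\alpha\beta$ in $\Gamma(z)$, namely the vertex of $T$ with address $P_t^{-1}$-type preimage $\alpha\beta$ of $v$.
\end{enumerate}

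Distinctness then follows from injectivity. The map $v\mapsto\sigma_{t}^{-1}(v)$, more precisely the prefix replacement $\beta'\beta\mapsto\alpha\beta$, is injective on words. Also, $\theta_z$ is injective on words of length $\leq D$ because $z\in G_n(D)$. The images of case-1 words are internal vertices, which have addresses of length $\leq2$ that are not of the form $\alpha\beta$. So these images never collide with the case-2 images.

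For the length bound in case 2, the point is that the prefix replacement changes lengths by at most one. Applying $t$ lengthens or shortens each address by at most $1$, so words of length $D-1$ in $z'$ pull back to words of length $\leq D$ in $z$, and these lie in $T$. This is where the loss of one level comes from.

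The main obstacle is to verify case 1 and the hypothesis of Lemma~\ref{lem:xy_on_good_paths} carefully for each of the four generators, in particular for $Y_n^{-1}$, where the internal vertex $p$ moves. I will handle it by listing, for each $t_n$, the leaf code in $z$ and the leaf code in $z'$ explicitly and checking the required length inequality. The condition $D\geq5$ guarantees $z\in G_n(4)$, so Lemma~\ref{lem:xy_on_good_paths} applies, and also that every leaf subtree has depth at least $2$, which ensures the regular cases are used throughout.
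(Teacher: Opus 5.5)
Your proposal is correct and is essentially the paper's argument. The paper pushes all of $W_d$ forward: it shows that the path coded by $\sigma_t(\mu)$ in $\Gamma(z')$ ends where $\mu$ ends in $\Gamma(z)$ (directly for leaf and internal addresses, and via Lemma~\ref{lem:xy_on_good_paths} otherwise), then concludes with $W_{d-1}\subseteq\sigma_t(W_d)$. You instead pull each word of length at most $D-1$ back to a word of length at most $D$, using the same ingredients: injectivity of $\theta_z$ on words of length at most $D$, the fact that $\sigma_t$ changes lengths by at most one, and Lemma~\ref{lem:xy_on_good_paths} for words that extend past a leaf.
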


\begin{proof}
Since $d(z) \geq 5$, we know that $z$ belongs to the regular case in the definition of $t_n \in \{X_n, Y_n, X_n^{-1}, Y_n^{-1}\}$. To simplify the notation, let us denote $d(z)=d \geq 5$. We are going to show that $d(z')\geq d-1$.

For a non-negative $k \in \mathbb{Z}$, let us denote by $W_k$ the set of binary words of length at most $k$ in $W$. It suffices to check that all directed paths started at $0$ and coded by the words from $W_{d-1}$ in $\Gamma(z')$ have distinct endpoints in $\Gamma(z')$. 
Since $d=d(z)$, we already know that all directed paths started at $0$ and coded by the words from $W_d$ in $\Gamma(z)$ have distinct endpoints in $\Gamma(z)$.

Moreover, by the definition of $\sigma_t$, for any word $\mu \in W$ we have $||\sigma_t(\mu)| -|\mu|| \leq 1$. Thus, \[\sigma_t^{-1}(W_{d-1}) \subseteq W_d.\]

Let $L\subseteq W_d$ be the list of addresses of the leaves and internal vertices of $t_n$ in $z$. Since we are in the regular case of the definition of $t_n$, for any $\mu \in L$, the path in $\Gamma(z')$ started at $0$ and coded by the word $\sigma_t(\mu)$ ends at one of the leaves or internal vertices of $t_n^{-1}$ in $z'$, and its endpoint is the same number as the endpoint of $\mu$.

Observe that for any $\mu \in W_d \setminus L$, the path coded by $\mu$ in $\Gamma(z)$ visits some leaf $h$ of $t_n$ in $z$, and then never returns to the internal vertices of $t_n$ in $z$. Thus, since $d \geq 5$, we may apply Lemma~\ref{lem:xy_on_good_paths} to any $\mu \in W_d \setminus L$.  This leads to the decomposition \[\mu =\mu(z,h)\beta\] for the first visited leaf $h$, such that the path in $\Gamma(z')$ coded by $\sigma_t(\mu(z,h))\beta$ ends at the same number as $\mu$. But since $P_t$ is defined on $\mu(z,h)$ and $\sigma_t$ extends $P_t$, we have \[\sigma_t(\mu(z,h))\beta=\sigma_t(\mu(z,h)\beta)=\sigma_t(\mu).\]

Thus, for $\mu \in W_d \setminus L$, the path with the code $\mu$ in $\Gamma(z)$ and the path with the code $\sigma_t(\mu)$ in $\Gamma(z')$ both end at the same number from $[n]$. But this implies that all paths coded by the words from $\sigma_t(W_d)$ end at different points in $\Gamma(z')$. Since $W_{d-1} \subseteq \sigma_t(W_d)$, we conclude that $d(z') \geq d-1$.
\end{proof}

\begin{lemma}\label{lem:length_control}

Consider an arbitrary $z$ with $d(z) \geq 5$ and any number $m \in [n]$. Let $t_n$ be any permutation in $\{X_n, Y_n, X_n^{-1}, Y_n^{-1}\}$ and set $z'=t_n(z)$. Then the lengths $l_z(m)$ and $l_{z'}(m)$ differ by at most $1$:
\[|l_z(m)-l_{z'}(m)| \leq 1.\]
\end{lemma}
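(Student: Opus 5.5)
By symmetry it suffices to prove one inequality, $l_{z'}(m) \le l_z(m)+1$, for every $m\in[n]$ and every choice of $t_n$. For the reverse inequality we apply the same argument to the pair $(z',t_n^{-1})$, since $z=t_n^{-1}(z')$. There is a catch: this requires $d(z')\ge 5$, while Lemma~\ref{lem:regularity persists} only gives $d(z')\ge 4$. So the plan is to prove the one-sided bound under the weaker hypothesis $d(z)\ge 4$. That hypothesis is all Lemma~\ref{lem:xy_on_good_paths} needs, and it guarantees the regular case for $z$ and for $z'$ with respect to $t_n^{\pm1}$. If $l_z(m)=\infty$ the inequality $l_{z'}(m)\le l_z(m)+1$ is trivial, so we assume $m$ has a finite address $\mu=\mu(z,m)$.

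First case: $m$ is a leaf or an internal vertex of $t_n$ in $z$. Remarks~\ref{rem:Xn_and_sigma_x} and~\ref{rem:Yn_and_sigma_y}, together with their analogues for the inverses (which hold because $z'$ is in the regular case for $t_n^{-1}$), give $\mu(z',m)=\sigma_t(\mu(z,m))$. The explicit formulas \eqref{eq:F-sigma-x} and \eqref{eq:F-sigma-y} show $\bigl||\sigma_t(w)|-|w|\bigr|\le 1$, which settles this case.

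Second case: $m$ is none of these vertices. Let $\gamma$ be the geodesic path in $\Gamma(z)$ from $0$ coded by $\mu$. Since $z\in G_n(4)$, the internal vertices of $t_n$ have addresses of length at most $2$ and the leaves are exactly the vertices reached when the path first leaves this region. So $\gamma$ must pass through some leaf $h$. The main obstacle is showing that $\gamma$ never returns to an internal vertex after $h$. For this we use minimality of the address: internal vertices $v$ satisfy $l_z(v)\le 2\le l_z(h)$. If $\gamma$ revisited $v$ after $h$, the subpath from $0$ to $v$ along $\gamma$ would have length greater than $l_z(v)$. Replacing it by the address of $v$ would produce a strictly shorter path from $0$ to $m$, contradicting that $\mu$ is the address of $m$.

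With that established, Lemma~\ref{lem:xy_on_good_paths} applies. We get $\mu=\mu(z,h)\beta$, and the path in $\Gamma(z')$ coded by $\sigma_t(\mu(z,h))\beta$ ends at $m$. Its length is at most $|\mu(z,h)|+1+|\beta|=|\mu|+1$, so $l_{z'}(m)\le l_z(m)+1$.

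Finally, for the reverse inequality, Lemma~\ref{lem:regularity persists} gives $d(z')\ge 4$. We rerun the same argument with $z'$ and $t_n^{-1}$ in place of $z$ and $t_n$; the leaves and internal vertices of $t_n^{-1}$ in $z'$ are the same set of numbers. This yields $l_z(m)\le l_{z'}(m)+1$ and completes the proof.
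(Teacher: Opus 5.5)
Your argument follows the paper's proof. Each one-sided bound comes from applying Lemma~\ref{lem:xy_on_good_paths} to a shortest path. Leaves and internal vertices are handled by the remarks together with $\bigl||\sigma_t(w)|-|w|\bigr|\le 1$. The asymmetry in the hypotheses is resolved as in the paper: depth $4$ suffices for one direction, and Lemma~\ref{lem:regularity persists} gives $d(z')\ge 4$ for the other.

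One local correction is needed in the step you flagged as the main obstacle. The inequality $l_z(v)\le 2\le l_z(h)$ is false, because some leaves have depth $1$: $a$ for $X_n$, $c$ for $X_n^{-1}$, and $s$ for $Y_n^{\pm1}$. For $X_n^{\pm1}$ this does no harm, since internal vertices there have depth at most $1$. For $Y_n^{\pm1}$ with $h=s$ and $v=p$, however, a path $0\to s\to p$ would have length exactly $2=l_z(p)$. Replacing its code by the address of $p$ neither shortens it nor makes it length-lexicographically smaller, so minimality alone gives no contradiction.

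What rules this case out is injectivity, which the paper invokes via $d\ge 4$. On a geodesic, the vertex reached after $i$ steps has depth exactly $i$. So an internal vertex $v$ can only appear at step $i=l_z(v)\le 2$. Since $z\in G_n(4)$, the length-$i$ prefix of $\mu$ must then equal $\mu(z,v)$. The set of addresses of internal vertices is prefix-closed, so the path up to that step visits only internal vertices. This contradicts the assumption that a leaf was visited earlier. With this repair your proof is complete.
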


\begin{proof}
Note that by assumption, $z$ belongs to the regular case of the definition of $t_n$. It is easy to see that $l_z(m)$ and $l_{z'}(m)$ are equal to the length of the shortest directed path from $0$ to $m$ in $\Gamma(z)$ and in $\Gamma(z')$ respectively. Moreover, the graph $\Gamma(z)$ has a directed path from $0$ to $m$ if and only if the graph $\Gamma(z')$ has a directed path from $0$ to $m$. Thus, $l_z(m)$ and $l_{z'}(m)$ are both finite or both infinite simultaneously.

The inequality obviously holds if both $l_z(m)$ and $l_{z'}(m)$ are infinite. Moreover, the definition of $X_n$ and $Y_n$ implies that we have $|l_z(m)-l_{z'}(m)| \leq 1$ when $m$ is an internal vertex or a leaf of $t_n$ in $z$.
If both $l_z(m)$ and $l_{z'}(m)$ are finite and $m$ is not an internal vertex or a leaf of $t_n$ in $z$, the conclusion will follow from two inequalities  \begin{equation}\label{eq:len_incr_1}
    l_{z'}(m)+1 \geq l_z(m),
\end{equation}
and 
\begin{equation}\label{eq:len_incr_2}
    l_{z}(m)+1 \geq l_{z'}(m).
\end{equation}
We first prove the inequality~\eqref{eq:len_incr_1}. Suppose that $\gamma$ is the shortest directed path from $0$ to $m$ in $\Gamma(z')$. Then $\gamma$ visits a leaf of $t_n^{-1}$ in $z'$. Moreover, since $d(z') \geq 4$ and $\gamma$ has minimal length, it does not visit any internal vertex of $t_n^{-1}$ in $z'$ afterwards.  Let $h$ be the first leaf visited by $\gamma$. Then, the coding $\mu(\gamma)$ has the form \[\mu(\gamma)=\mu(z',h)\beta\] for some $\beta \in W$. By Item $2$ of Lemma~\ref{lem:xy_on_good_paths} applied to $z'$, $t_n^{-1}$ and $\gamma$, the path in $\Gamma(z)$ coded by the word \[\sigma_t^{-1}(\mu(z',h))\beta=\mu(z,h)\beta\] starts at $0$ and ends at $m$. Hence, 
\[|\mu(z,h)\beta|=|\mu(z,h)|+|\beta| \geq l_z(m)\]
But then we have 
\[l_{z'}(m)=|\gamma|=|\mu(z',h)|+|\beta|\geq|\mu(z,h)|-1+|\beta|\geq l_z(m)-1.\]
The inequality \eqref{eq:len_incr_2} is proved in the same way, but with the roles of $z$ and $z'$ switched. Note that the inequality $d(z)\geq 5$ is already assumed in the statement of the lemma.
\end{proof}

\begin{lemma}\label{lem:top_induction_base}
Let $z \in \Omega_n$ be a configuration that satisfies the inequality $d(z) \geq 5$. Consider any $t_n \in \{X_n, Y_n, X_n^{-1}, Y_n^{-1}\}$ and set $z'=t_n(z)$. Then for any $m \in [n]$ with $l_z(m) \leq d(z) - 2$, we have 
\[\mu(z',m)=\sigma_t(\mu(z,m)).\]
\end{lemma}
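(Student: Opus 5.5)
The plan is to split according to whether $m$ is one of the leaves or internal vertices of $t_n$ in $z$, and then to show that the candidate word $\sigma_t(\mu(z,m))$ is a path to $m$ in $\Gamma(z')$ that is minimal in the length-lexicographic order. Set $d=d(z)\geq 5$ and $w=\mu(z,m)$, so that $|w|\leq d-2$.

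\textbf{Leaves and internal vertices.} If $m$ is a leaf or an internal vertex of $t_n$ in $z$, the claim $\mu(z',m)=\sigma_t(\mu(z,m))$ is exactly Remark~\ref{rem:Xn_and_sigma_x}, Remark~\ref{rem:Yn_and_sigma_y}, or their analogues for $t_n^{-1}$. The inverse cases follow by reading Figures~\ref{fig:x} and~\ref{fig:y} backwards, using $d(z)\geq 4$.

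\textbf{Other vertices: a path to $m$.} Otherwise, the path coded by $w$ in $\Gamma(z)$ passes through a first leaf $h$. Because $w$ is a shortest path and $d\geq 5$, it does not return to the internal vertices afterwards. Lemma~\ref{lem:xy_on_good_paths} then gives $w=\mu(z,h)\beta$, and the path coded by $\mu(z',h)\beta$ in $\Gamma(z')$ ends at $m$. Since $P_t$ is defined on $\mu(z,h)$, we have
\[\mu(z',h)\beta=\sigma_t(\mu(z,h))\beta=\sigma_t(w).\]
So $\sigma_t(w)$ codes a path from $0$ to $m$ in $\Gamma(z')$.

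\textbf{Minimality: the main step.} Let $w'=\mu(z',m)$, so $w'\leq\sigma_t(w)$ in length-lexicographic order. By Lemma~\ref{lem:length_control}, $|w'|=l_{z'}(m)\leq l_z(m)+1\leq d-1$. Lemma~\ref{lem:regularity persists} gives $d(z')\geq d-1$, so every word of length at most $d-1$ codes a path with a distinct endpoint in $\Gamma(z')$. The word $\sigma_t(w)$ has length at most $|w|+1\leq d-1$ and also ends at $m$. Two words in $W_{d-1}$ ending at the same vertex must coincide, so $w'=\sigma_t(w)$.

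The main obstacle is exactly this minimality step. Minimality is not proved by comparing orders, since $\sigma_t$ does not preserve the length-lexicographic order. Instead the argument avoids comparison altogether: both $w'$ and $\sigma_t(w)$ lie in $W_{d-1}$, and injectivity of endpoints on $W_{d-1}$ in $\Gamma(z')$ forces them to be equal. This is why the hypothesis $l_z(m)\leq d(z)-2$ is needed, together with the length bound $||\sigma_t(\mu)|-|\mu||\leq 1$. The same injectivity argument also covers the leaf and internal vertex cases, since those addresses have length at most $3\leq d-2$, which gives a uniform proof.
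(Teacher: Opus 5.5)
Your proposal is correct and follows the paper's proof essentially step for step. It uses the same reduction to $m$ that is neither a leaf nor an internal vertex, the same application of Lemma~\ref{lem:xy_on_good_paths} to produce a path in $\Gamma(z')$ coded by $\sigma_t(\mu(z,m))$ of length at most $d(z)-1$, and the same identification of that path with the address. That identification rests on uniqueness of short paths, using Lemma~\ref{lem:regularity persists}; your appeal to Lemma~\ref{lem:length_control} is harmless but not needed, since the constructed path already gives $l_{z'}(m)\leq d(z)-1$.
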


\begin{proof}
The statement of this lemma is obviously true if $m$ is one of the internal vertices or leaves of $t_n$, so we  may assume that $m$ does not belong to any of these sets. Then, any path\footnote{Since the statement of this lemma implies that $l_z(m)$ is finite, we may assume that such a path exists, and the address of $m$ is well-defined.} from $0$ to $m$ in $\Gamma(z)$ visits the set of the leaves of $t_n$ in $z$.

Since $l_z(m) \leq d(z)$, there is a unique directed path $\gamma$ of length at most $d(z)$ from $0$ to $m$. Then, this path is coded by the address of $m$ in $z$, so we have
\[\mu(\gamma)=\mu(z,m)\]
in $\Gamma(z)$.

Moreover, by Lemma \ref{lem:length_control}, we get 
\[l_{z'}(m)\leq l_z(m)+1 \leq d(z)-1.\]

Notice that Lemma \ref{lem:regularity persists} implies that
\[d(z')\geq d(z) -1, \]
so we also have $l_{z'}(m) \leq d(z')$. But then there exists exactly one path $\gamma'$ of length at most $d(z')$ from $0$ to $m$ in $\Gamma(z')$, and this path is coded by the address of $m$ in $z'$, so 
\[\mu(\gamma')=\mu(z',m).\]
Now we will compare a new path in $\Gamma(z')$ constructed from $\gamma$ with $\gamma'$. Let $h$ be the first leaf of $t_n$ visited by $\gamma$ in $\Gamma(z)$. It is easy to see that we have
\[\mu(\gamma)=\mu(z,h)\beta\] for some $\beta \in W$. Moreover, since $|\gamma| \leq d(z)$, $\gamma$ never visits the internal vertices of $t_n$ after visiting $h$, so by Item (2) of Lemma~\ref{lem:xy_on_good_paths}, the directed path in $\Gamma(z')$ coded by $\sigma_t(\mu(z,h))\beta$ also starts at $0$ and ends at $m$. Furthermore, its length does not exceed $|\gamma|+1$, and since 
\[|\gamma|+1=l_z(m)+1\leq d(z)-1\leq d(z'),\]

this path must coincide with $\gamma'$.
Thus, we have the desired equality
\[\mu(z',m)=\sigma_t(\mu(z,h))\beta=\sigma_t(\mu(z,m)),\]

where the last equality follows from the fact that $\sigma_t$ extends the action of $P_t$ on $W$.

\end{proof}

\begin{lemma}\label{lem:no_outchanges}
Let $\rho =w_k \ldots w_1 \in \FF$ be an arbitrary element of the free group with length $k$. Consider arbitrary $z \in \Omega_n$ with $d(z) \geq k+6$ and define $z_k=(L_k,R_k)=\rho(X_n,Y_n)(z)$. Then for any $m \in [n]$ with $l_z(m) \geq k+3$, we have \[L_k(m)=L(m) \text{ and } R_k(m)=R(m).\]
\end{lemma}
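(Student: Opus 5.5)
We argue by induction along the intermediate configurations. Set $z_0=z$ and $z_j=w_j(X_n,Y_n)(z_{j-1})$ for $j=1,\ldots,k$, with $z_j=(L_j,R_j)$. The plan is to control the depth $d(z_j)$ and the lengths $l_{z_j}(m)$ for every $j$ simultaneously. Then at each step the only rows of the table that change belong to vertices close to $0$, and a vertex that starts far from $0$ never becomes close.

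The first step is depth control. By Lemma~\ref{lem:regularity persists}, if $d(z_{j-1})\geq 5$ then $d(z_j)\geq d(z_{j-1})-1$. Starting from $d(z_0)\geq k+6$, induction gives
\[
d(z_j)\geq k+6-j\geq 6 \quad\text{for all } 0\leq j\leq k,
\]
so the hypothesis $d\geq5$ holds at every step $j-1\leq k-1$. In particular each $z_{j-1}$ lies in the regular case for $w_j$.

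The second step is length control. For each $j\leq k$, Lemma~\ref{lem:length_control} applied to $z_{j-1}$ (which has depth at least $5$) gives $|l_{z_j}(m)-l_{z_{j-1}}(m)|\leq1$. Hence, for a fixed $m$ with $l_z(m)\geq k+3$, we get
\[
l_{z_{j-1}}(m)\geq k+3-(j-1)\geq 4 \quad\text{for every } 1\leq j\leq k.
\]
If $l_z(m)=\infty$, the proof of Lemma~\ref{lem:length_control} shows the length stays infinite, and the same conclusion holds.

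The third step is the conclusion. By Remarks~\ref{rem:Xn_and_sigma_x} and \ref{rem:Yn_and_sigma_y}, an application of $X_n$ or $Y_n$ to $z_{j-1}$ changes rows only at vertices $t$ with $l_{z_{j-1}}(t)\leq 2$. The same holds for the inverses $X_n^{-1},Y_n^{-1}$. To see this, note that $z_j$ is in the regular case for $w_j^{-1}$, and the rows changed are those of the internal vertices of $w_j$ in $z_{j-1}$. These vertices also have length at most $2$ in $z_{j-1}$: they are at distance at most $2$ from $0$ in $z_j$, and by Lemma~\ref{lem:length_control} at distance at most $3$ in $z_{j-1}$. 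More directly, the internal vertices of $t_n^{-1}$ in $z_{j-1}$ are $0$, $L(0)$, and for $Y^{-1}$ also $L(R(0))$, all of length at most $2$. Since $l_{z_{j-1}}(m)\geq4>2$, the row of $m$ is unchanged at step $j$. Chaining over $j=1,\ldots,k$ gives $L_k(m)=L(m)$ and $R_k(m)=R(m)$.

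The main point to check carefully is the claim about inverses in the third step, namely that $X_n^{-1}$ and $Y_n^{-1}$ alter only rows of vertices of length at most $2$. We verify it by reading off the regular-case tables (4.2) and (4.4) backwards. The changed rows are $0$ and $L(0)$ for $X_n^{-1}$, and $r=R(0)$ and $L(r)$ for $Y_n^{-1}$. In a configuration of depth at least $3$, these are exactly vertices of length at most $2$. The margin $k+3$ in the statement is more than enough for this, since only $l\geq 3$ is needed at each step.
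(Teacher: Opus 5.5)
Your proof is correct and follows the paper's argument: the depth bound $d(z_j)\geq k+6-j$ from Lemma~\ref{lem:regularity persists}, the length bound $l_{z_j}(m)\geq k+3-j$ from Lemma~\ref{lem:length_control}, and the observation that each step only rewrites rows of vertices within distance $2$ of the root. Your explicit check of the inverses fills in a detail the paper leaves to ``the definitions of $X_n$ and $Y_n$'': $X_n^{-1}$ rewrites the rows of $0$ and $L(0)$, and $Y_n^{-1}$ rewrites those of $R(0)$ and $L(R(0))$ (your earlier listing of $L(0)$ for $Y_n^{-1}$ is a slip that you correct at the end).
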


\begin{proof}
The case when $ l_z(m) = \infty$ is obvious. Thus, we may assume that there is a directed path from $0$ to $m$ in $\Gamma(z)$.

For $i=1, \ldots k$, set $\rho_i=w_i \ldots w_1(X_n,Y_n)$ - the word $w_i...w_1$ evaluated over $X_n,Y_n$, and let $z_i=(L_i,R_i)=\rho_i(z)$.

A straightforward induction on $i$ together with Lemma~\ref{lem:regularity persists} shows that for each $i=1, \ldots k$, we have 
\[d(z_i) \geq k+6-i.\]

Then, induction on $i$ together with Lemma~\ref{lem:length_control} shows that for each $i=1, \ldots k$, we have 
\[l_{z_i}(m) \geq k+3-i.\]

Since $l_{z_i}(m) \geq 3$ for every 
$i=1, \ldots k$, the definitions of $X_n$ and $Y_n$ imply that $L_i(m)$ and $R_i(m)$ always stay fixed, so we indeed have $L_k(m)=L(m)$ and $R_k(m)=R(m)$.
\end{proof}

\begin{lemma}\label{lem:addr_at_the_top}
Let $\rho =w_k \ldots w_1 \in \FF$ be any element of the length $k$ in the free group. Consider any $z \in \Omega_n$ with $d(z) \geq 3k+6$ and set $z_k=(L_k,R_k)=\rho(X_n,Y_n)(z)$.
Then for every $m$ with $l_z(m) \leq k+4$, we have \[\mu(z_k,m)=\sigma_{\pi(\rho)}(\mu(z,m)).\]
\end{lemma}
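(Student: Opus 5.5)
We argue by induction on the prefix length $i$, iterating Lemma~\ref{lem:top_induction_base} one letter at a time. As in the proof of Lemma~\ref{lem:no_outchanges}, put $\rho_i=w_i\cdots w_1(X_n,Y_n)$ and $z_i=\rho_i(z)$, with $z_0=z$. Since $d(z)\geq 3k+6$, repeated use of Lemma~\ref{lem:regularity persists} gives
\[d(z_i)\geq 3k+6-i \geq 2k+6 \quad\text{for } 0\leq i\leq k,\]
because at each step $d(z_{i-1})\geq 5$. In particular, every $z_i$ lies in the regular case for all four generators, and Lemmas~\ref{lem:length_control} and \ref{lem:top_induction_base} apply at every step.

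Fix $m$ with $l_z(m)\leq k+4$. Iterating Lemma~\ref{lem:length_control} gives
\[l_{z_i}(m)\leq k+4+i\quad\text{for all } i\leq k.\]
To apply Lemma~\ref{lem:top_induction_base} to $z_{i}$, the letter $w_{i+1}$ and the vertex $m$, we need $l_{z_i}(m)\leq d(z_i)-2$. This follows from
\[k+4+i\leq 3k+4-i,\]
which is equivalent to $2i\leq 2k$ and so holds for every $i\leq k$. The hypothesis $d(z)\geq 3k+6$ is chosen exactly to make this inequality work. Hence for each $i<k$,
\[\mu(z_{i+1},m)=\sigma_{w_{i+1}}(\mu(z_i,m)),\]
where $\sigma_{w}$ denotes $\sigma_x^{\pm1}$ or $\sigma_y^{\pm1}$ according to the letter $w$. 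For inverse letters, Lemma~\ref{lem:top_induction_base} is already stated for $t_n\in\{X_n^{-1},Y_n^{-1}\}$ with $\sigma_t=\sigma_{t^{-1}}^{-1}$, so no extra work is needed there.

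Composing these equalities from $i=0$ to $k-1$ gives
\[\mu(z_k,m)=\sigma_{w_k}\cdots\sigma_{w_1}(\mu(z,m)).\]
By Proposition~\ref{prop:F-word-action}, $\sigma$ is a homomorphism, so this composition equals $\sigma_{\pi(\rho)}(\mu(z,m))$, which is the claim.

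The main point needing care is the bookkeeping that keeps $m$ inside the region where Lemma~\ref{lem:top_induction_base} holds at every step. Lengths can grow by one per step while depth can shrink by one per step, and the inequality above is what absorbs both effects. A second subtlety is the convention $\rho=w_k\cdots w_1$ with $w_1$ applied first, which must match the left-action composition in $\sigma$. Everything else is a direct chaining of the previous lemmas.
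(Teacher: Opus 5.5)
Your proposal is correct and matches the paper's proof: both bound $d(z_i)\geq 3k+6-i$ via Lemma~\ref{lem:regularity persists} and $l_{z_i}(m)\leq k+4+i$ via Lemma~\ref{lem:length_control}, then chain Lemma~\ref{lem:top_induction_base} letter by letter and use that $\sigma$ is a homomorphism. Your inequality $k+4+i\leq 3k+4-i$ is a slightly sharper form of the paper's check $l_{z_i}(m)\leq 2k+4\leq d(z_i)-2$.
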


\begin{proof}
For $i=1, \ldots k$, set $\rho_i(X_n,Y_n)=w_i \ldots w_1(X_n,Y_n)$ -- the word $w_i...w_1$ evaluated over $X_n,Y_n$, and let $z_i=(L_i,R_i)=\rho_i(X_n,Y_n)(z)$ with $z_0=z$.
Let us also define $\pi(\rho_i)=w_i \ldots w_1(x,y)$ -- 
the word $w_i...w_1$ evaluated over $x,y$ in the Thompson's group $F$. Finally, to simplify the notation, for $i=1, \ldots, k$, set $\sigma_i=\sigma_{\pi(\rho_i)}$.

A straightforward induction on $i$ together with Lemma~\ref{lem:regularity persists} shows that for each $i=1, \ldots k$, we have 
\[d(z_i) \geq 3k+6-i \geq 2k+6.\]

Moreover, induction on $i$ together with Lemma~\ref{lem:length_control} shows that for each $i=1, \ldots k$, we have 
\[l_{z_i}(m) \leq k+4+i \leq 2k+4 \leq d(z_i)-2.\]

Thus, for each $i=0, \ldots k-1$, the number $m$ and the configuration $z_i$ satisfy the conditions of Lemma $\ref{lem:top_induction_base}$. Hence, a straightforward inductive argument shows that for each $i=1, \ldots, k$ we have
\[\mu(z_i,m)=\sigma_i(\mu(z,m)),\]
which finishes the proof of the lemma.

\end{proof}

\begin{proposition}\label{prop:key_words_triv}
Let $\rho =w_k \ldots w_1 \in \FF$ be any element of length $k$ in the free group. Consider any $z \in \Omega_n$ with $d(z) \geq 3k+6$. Then the following two conditions are equivalent.

\begin{enumerate}
    \item $\pi(\rho)=id_F$ in Thompson's group $F$.
    \item $\rho(X_n,Y_n)(z)=z$.
\end{enumerate} 
\end{proposition}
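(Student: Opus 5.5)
The plan is to compare the configurations $z$ and $z_k=\rho(X_n,Y_n)(z)$ row by row. The rows of $T(L,R)$ split into two groups according to the distance of the index from the root. For indices far from the root, Lemma~\ref{lem:no_outchanges} applies directly. For indices near the root, we use Lemma~\ref{lem:addr_at_the_top} together with the faithfulness of $\sigma$ from Proposition~\ref{prop:F-word-action}. Throughout, write $z=(L,R)$ and $z_k=(L_k,R_k)$. The assumption $d(z)\ge 3k+6$ gives $d(z)\ge k+6$ as well, so both lemmas are available.

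\emph{(1)$\Rightarrow$(2).} Suppose $\pi(\rho)=id_F$, so that $\sigma_{\pi(\rho)}=\id_W$. Take any $m\in[n]$.

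If $l_z(m)\ge k+3$, then Lemma~\ref{lem:no_outchanges} gives $L_k(m)=L(m)$ and $R_k(m)=R(m)$.

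If $l_z(m)\le k+2$, then both $L(m)$ and $R(m)$ have $l_z\le k+3\le k+4$. Their addresses are $\mu(z,m)0$ and $\mu(z,m)1$: indeed, $d(z)\ge k+3$, so words of length at most $k+3$ reach distinct vertices, and these words are therefore the unique shortest codes. Lemma~\ref{lem:addr_at_the_top} then gives $\mu(z_k,v)=\mu(z,v)$ for $v\in\{m,L(m),R(m)\}$. It remains to see that addresses in $z_k$ determine edges near the root. By Lemma~\ref{lem:regularity persists} we have $d(z_k)\ge 2k+6$. Hence in $z_k$ the path coded by $\mu(z_k,m)0=\mu(z,m)0$ ends at $L_k(m)$. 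That same word is the address of $L(m)$ in $z_k$, and it has length at most $k+3<d(z_k)$, so the path ends at $L(m)$. Thus $L_k(m)=L(m)$, and the same argument gives $R_k(m)=R(m)$.

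Every row agrees, so $z_k=z$.

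\emph{(2)$\Rightarrow$(1).} Suppose $\rho(X_n,Y_n)(z)=z$. Then $\mu(z_k,m)=\mu(z,m)$ for all $m$. The depth condition $d(z)\ge 3k+6$ means that every word $w$ with $|w|\le k+4$ is the address of a unique vertex $m_w$ with $l_z(m_w)=|w|\le k+4$. Lemma~\ref{lem:addr_at_the_top} then gives
\[\sigma_{\pi(\rho)}(w)=\mu(z_k,m_w)=\mu(z,m_w)=w,\]
so $\sigma_{\pi(\rho)}$ fixes every word of length at most $k+4$.

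The main obstacle is upgrading this to $\pi(\rho)=id_F$, since faithfulness of $\sigma$ alone needs fixing all of $W$. I would argue via tree pair diagrams. An element $g=\pi(\rho)$ with word length $k$ has a reduced prefix-replacement representation whose codewords all have length at most about $k+1$, since each generator changes prefix depths by at most $1$. Take such a codeword $\alpha_i$ and any $w$ with $|w|\le k+4$ that has $\alpha_i$ as a proper prefix. Then $P_g(w)=\beta_i w'$ is defined, and $P_g(w)=w$ forces $\beta_i=\alpha_i$, because both are prefixes of $w$ and $|\beta_i|=|w|-|w'|=|\alpha_i|$. This holds for every $i$, so $g$ is the identity.

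If the codeword length bound is delicate, an alternative is to use the piecewise linear picture. $\sigma_g=\kappa^{-1}g\kappa$ fixing all words of length at most $k+4$ means that $g$ fixes all dyadic points of denominator $2^{k+5}$. Since $g$ has slopes $2^{j}$ with $|j|\le k$ and breakpoints at level at most roughly $k+2$, it is the identity on each interval between consecutive fixed points. Either way, $\pi(\rho)=id_F$.
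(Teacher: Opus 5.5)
Your proposal is correct and follows essentially the paper's proof. For (1)$\Rightarrow$(2), splitting the rows at $l_z(m)=k+3$ and using Lemma~\ref{lem:no_outchanges}, Lemma~\ref{lem:addr_at_the_top} and $d(z_k)\ge 2k+6$ is exactly the paper's argument. Your (2)$\Rightarrow$(1) is the paper's contrapositive argument (a codeword of length at most $k+2$ moved by $\sigma_{\pi(\rho)}$ forces an address change) run in the direct direction.

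The one imprecision is your codeword bound ``about $k+1$''. The correct bound is $k+2$; already $y$ has the codeword $110$. It holds because every word of length at least $3$ lies in the domain of $P_t$ for each $t\in\{x^{\pm1},y^{\pm1}\}$, and $\sigma_t$ changes lengths by at most one. This is comfortably within your $k+4$ margin, so the argument goes through.
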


\begin{proof}

First, we prove the implication $(1) \implies (2)$. Suppose that $\pi(\rho)=id_F$ in Thompson's group $F$. Let $z=(L,R)$ and set 
\[z_k=(L_k,R_k)=\rho(X_n,Y_n)(z).\] We are going to show that $L_k=L$ and $R_k =R$.

Notice that for any $m \in [n]$ with $l_z(m) \geq k+3$, we already have
\[L(m)=L_k(m) \text{ and } R(m)=R_k(m),\]
by Lemma~\ref{lem:no_outchanges}. Moreover, since $\sigma: F \rightarrow \Sym(W)$ is a homomorphism, equality $\pi(\rho)=id_F$ implies that $\sigma_{\pi(\rho)}=id$ in $\Sym(W)$. Then, by Lemma~\ref{lem:addr_at_the_top}, for every $m \in B(z,k+4)$ we have 
\begin{equation}\label{eq:all_fixed}
\mu(z_k,m)=\mu(z,m).    
\end{equation}
Now, the conditions $d(z_k) \geq 2k+6$ and $d(z) \geq 3k+6$ together with \eqref{eq:all_fixed} imply the following equalities for every $m \in B(z,k+3)$:
\[\mu\bigl(z_k,L_k(m)\bigr)=\mu(z_k,m)0=\mu(z,m)0=\mu\bigl(z,L(m)\bigr)=\mu\bigl(z_k,L(m)\bigr),\]
Indeed, the equalities 
\[\mu(z_k,m)0=\mu(z,m)0 \, \text{ and } \, \mu\bigl(z,L(m)\bigr)=\mu\bigl(z_k,L(m)\bigr)\] follow from \eqref{eq:all_fixed} together with the observation that $m, \,L(m) \in B(z,k+4)$. And the equalities 
\[\mu\bigl(z_k,L_k(m)\bigr)=\mu(z_k,m)0 \, \text{ and }\,\mu(z,m)0=\mu\bigl(z,L(m)\bigr)\]
follow from the uniqueness of a minimal directed path from $0$ to any given vertex in $B(z_k,k+4)$ and in $B(z,k+4)$.

Since $d(z_k) \geq 2k+6$, for every $m \in B(z,k+3)$ the equality
\[\mu\bigl(z_k,L_k(m)\bigr) = \mu\bigl(z_k,L(m)\bigr)\]
together with uniqueness of addresses in $B(z_k,k+4)$ implies that
\[L(m)=L_k(m).\]
Finally, since this equality holds for every $m \in B(z,k+3)$, we have $L=L_k$ on the whole set $[n]$.

 The equality $R(m)=R_k(m)$ for $m \in B(z,k+3)$ is proved similarly, and it also immediately leads to $R=R_k$ on $[n]$. Thus, we indeed have  $z=z_k$. This finishes the proof of the implication $(1) \implies (2)$.

Next, we prove the implication $(2) \implies (1)$. Let us show that the negation of $(1)$ implies the negation of $ (2)$. Assume that $\pi(\rho)\ne id_F$ and consider the prefix-replacement representation of $\pi(\rho)$.  Suppose that in this representation, $\pi(\rho)$ replaces the prefix code $(\alpha_1 ,\ldots, \alpha_l)$ by the code $(\beta_1, \ldots, \beta_l)$. Since $\rho$ has length $k$, no prefix in this representation might have the length greater than $k+2$. Moreover, as this representation is non-trivial, there exist two distinct words $\alpha, \beta \in W$ of length at most $k+2$ such that $P_{\pi(\rho)}(\alpha)=\sigma_{\pi(\rho)}(\alpha)=\beta$.  Thus, the number $m$ with $\mu(z,m)=\alpha$ changes its address in $z_k$. However, since $d(z), d(z_k) \geq 2k+6,$ the addresses of numbers determine the restrictions of $\Gamma(z_k)$ and $\Gamma(z)$ to $B(z_k, k+3)$ and $B(z,k+3)$, respectively, completely. Therefore, $z_k$ and $z$ will differ already at some vertex of the ball $B(z,k+3)$.

\end{proof}

\begin{remark}
The proof of the implication $ \pi(\rho)\ne id_F \implies\rho(X_n,Y_n)(z) \ne z$ relies mostly on the fact that $\sigma$ extends the partial action $P$ of $F$ on $W$. However, the fact that $\sigma$ is indeed an action is crucial for the implication $ \pi(\rho) = id_F \implies\rho(X_n,Y_n)(z) = z$.
\end{remark}

\FloatBarrier

\section{Computation of $|G_n(r)|$ and the proof of Theorem~\ref{thm:main}}\label{sec:finishing_the_proof}

\begin{lemma}\label{lem:regular_set_prob}
For any $ r\geq 1$ and any $n\geq 2^{r+1}-1$ we have the equality
\begin{equation}\label{eq:exact-probability}
 \Prob_n(G_n(r))=\prod_{j=1}^{2^{r+1}-2}\left(1-\frac{j}{n}\right).
\end{equation}
In particular, 
\begin{equation}\label{eq:collision}
 \Prob_n(G_n(r)) \geq 1 -\frac{(2^{r}-1)(2^{r+1}-1)}{n}.
\end{equation}
\end{lemma}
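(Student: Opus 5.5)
We count configurations in $G_n(r)$ by revealing the values of $L$ and $R$ along the binary tree in breadth-first, length-lexicographic order. Let $N=2^{r+1}-1$ be the number of words in $W$ of length at most $r$, and list them as $w_0=\emptyword,w_1,\dots,w_{N-1}$ in the length-lexicographic order. For $j\ge 1$ write $w_j=w_{q(j)}\epsilon_j$, where $\epsilon_j\in\{0,1\}$; the parent index satisfies $q(j)<j$, and the words $w_j$ with $|w_j|\le r-1$ are exactly the parents. The condition $s\in G_n(r)$ says that the vertices $v_j=\theta_s(w_j)(0)$ are pairwise distinct. Moreover, $v_j$ equals $L(v_{q(j)})$ if $\epsilon_j=0$ and $R(v_{q(j)})$ if $\epsilon_j=1$.

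The main step is to show that $s\in G_n(r)$ if and only if the following hold:
\begin{enumerate}
\item for each $j=1,\dots,N-1$, in turn, the value $L(v_{q(j)})$ or $R(v_{q(j)})$ avoids the $j$ already determined vertices $v_0,\dots,v_{j-1}$;
\item all other coordinates of $(L,R)$ are arbitrary.
\end{enumerate}
The points to check are these. The value assigned at step $j$ is a coordinate not previously assigned, because the pairs (parent, letter) are distinct. The parent $v_{q(j)}$ is already determined, since $q(j)<j$. Distinctness of the $v_j$ makes the parents $v_0,\dots,v_{2^r-2}$ pairwise distinct. Hence the $2(2^r-1)=N-1$ coordinates $L(v_i),R(v_i)$ for $i\le 2^r-2$ are genuinely different entries of the table $T(L,R)$.

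Counting sequentially then gives the formula. At step $j$ there are $n-j$ admissible values. This count does not depend on the earlier choices, although the identity of the coordinate does. There are $N-1=2^{r+1}-2$ such steps, and the remaining $2n-(N-1)$ coordinates are free. So
\[
|G_n(r)|=\prod_{j=1}^{N-1}(n-j)\cdot n^{2n-(N-1)}.
\]
Dividing by $|\Omega_n|=n^{2n}$ gives \eqref{eq:exact-probability}. The hypothesis $n\ge N$ keeps every factor nonnegative. Equivalently, one can phrase this as a chain of conditional probabilities under $\Prob_n$: conditional on the revealed values, the next coordinate is still uniform on $[n]$, because it is an unrevealed independent coordinate. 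The only delicate step is justifying this bijective sequential description, namely that the coordinate revealed at each step is fresh. This is exactly the observation that the parents are distinct on the event considered so far.

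For \eqref{eq:collision} we use the Weierstrass inequality $\prod(1-x_j)\ge 1-\sum x_j$ for $x_j\in[0,1]$. This gives
\[
\Prob_n(G_n(r))\ge 1-\frac{1}{n}\sum_{j=1}^{N-1}j=1-\frac{(N-1)N}{2n}=1-\frac{(2^r-1)(2^{r+1}-1)}{n}.
\]
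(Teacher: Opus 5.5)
Your proposal is correct and is essentially the paper's argument. The paper also assigns distinct vertices to the words of length at most $r$ in length-lexicographic order: it chooses the ordered list of $2^{r+1}-2$ distinct nonzero labels at once, giving $(n-1)\cdots(n-2^{r+1}+2)$ choices. It then fills the remaining $n-2^r+1$ rows freely and applies the same inequality $1-\prod(1-t_j)\le\sum t_j$. Your sequential revealing differs only in making explicit the bijectivity point (each revealed coordinate is fresh because the parents are distinct), which the paper leaves implicit.
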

\begin{proof}
Let $W_r$ be the set of all binary words of length at most $r$ equipped with the length-lexicographic order. It is easy to see that $|W_r|=2^{r+1}-1$.
Note that elements of $G_n(r)$ can be constructed by the following two-stage procedure. 

\begin{enumerate}
\item To define $s=(L,R) \in G_n(r)$, we first select an ordered list $A$ of $2^{r+1}-2$ distinct numbers from $[n] \setminus\{0\}$. Next, we define $s=(L,R)$ on the first $2^r-2$ elements of $A$ and $0$ as follows. Let $\mu:\{0\} \cup A \to W_r$ be the order-preserving bijection, so that for example, $\mu(0)=\emptyword$ and $\mu$ maps the largest $2^r$ elements of $A$ to the words of length $r$. Then, for any $a \in \{0\} \cup A$ if $\mu(a)=w \in W_r$ and $w0$ and $w1$ are still in $W_r$, define $L(a)=\mu^{-1}(w0)$ and $R(a)=\mu^{-1}(w1)$. This assignment guarantees that $d(s) \geq r$, and on $A$ the map $\mu$ is equal to the address map $\mu(s,\cdot)$ defined before. 

Note that the first stage can be performed in exactly
\[(n-1)(n-2) \ldots (n-2^{r+1}+2) \]
ways.

\item At the second stage, we need to define $s$ on the largest $2^r$ elements of $A$ and on every element of the set $[n]\setminus (A \cup \{0\})$. The rows of $T(L,R)$ indexed by all these numbers are empty, so we can fill them with the elements of $[n]$ arbitrarily. Therefore, the second stage can be performed in $n^{n-2^r+1}\times n^{n-2^r+1}=n^{2n-2^{r+1}+2}$ ways.
\end{enumerate}

As a result, we showed the equality
\[|G_n(r)|=(n-1)(n-2) \ldots (n-2^{r+1}+2)n^{2n-2^{r+1}+2}.\]
After we divide both sides by $n^{2n}$, we obtain the formula
\eqref{eq:exact-probability}.
Finally, recall that for any $k \in \mathbb{N}$ and any numbers $t_1, \ldots, t_k$ such that $0\leq t_j\leq 1, \,  j =1 \ldots k$, we have
\[1-\prod_{j=1}^k(1-t_j)\leq\sum_{j=1}^k t_j.\] 
Combining it with the formula \eqref{eq:exact-probability} we obtain the inequality
\[
 1- \Prob_n(G_n(r))\leq\sum_{j=1}^{2^{r+1}-2}\frac{j}{n}
 =\frac{(2^{r}-1)(2^{r+1}-1)}{n},
\]
which is equivalent to \eqref{eq:collision}.
\end{proof}

Now we are ready to prove a quantitative version of Theorem~\ref{thm:main} and show that the maps $\varphi_n: F \to \Sym(\Omega_n)$ that naively extend the map $x \mapsto X_n$ and $y \mapsto Y_n$ to $F$ indeed define a sequence of sofic approximations to $F$.

\begin{definition}
For any $g \in F$ let $\omega_g \in \FF$ denote the smallest element in the length-lexicographic order in the free group that projects to $g$, i.e. we have $\pi(\omega_g)=g$. 
\end{definition}

\begin{theorem}\label{thm:F_sofic_quantitative}
For any $\epsilon \in (0,1)$ and any $ R \in \mathbb{N}$, set 
\[m(R, \epsilon)=\dfrac{(2^{9R+6}-1)(2^{9R+7}-1)}{\epsilon}.\]
For any $n \in \mathbb{N}$ consider the map $\varphi_n: F \to \Sym(\Omega_n)$ defined by $g \mapsto \omega_g(X_n,Y_n)$. Then, for any $n \geq m(R,\epsilon)$, the map $\varphi_n$ is a $(B(R),\epsilon)$-sofic approximation for $F$, where $B(R)$ denotes the ball of radius $R$ in the Cayley graph of $F$ with respect to its standard generating set. Moreover, the Schreier graph of the action of $\langle X_n, Y_n \rangle$ on $\Omega_n$, with the marks $X_n^{\pm1}$ and $Y_n^{\pm1}$ replaced by $x^{\pm1}$ and $y^{\pm1}$, respectively, is an $(R,\epsilon)$ marked approximation to $F$.

\end{theorem}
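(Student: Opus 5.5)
The plan is to reduce both claims to Proposition~\ref{prop:key_words_triv} applied on the regular set $G_n(r)$, with $r=9R+6$, and then to bound the complement of $G_n(r)$ by Lemma~\ref{lem:regular_set_prob}. With this choice the bound \eqref{eq:collision} reads
\[
\Prob_n(G_n(9R+6))\ \ge\ 1-\frac{(2^{9R+6}-1)(2^{9R+7}-1)}{n}\ \ge\ 1-\eps
\quad\text{for } n\ge m(R,\eps).
\]
The hypothesis $n\ge 2^{r+1}-1$ of that lemma holds automatically, since $m(R,\eps)$ is much larger. So it suffices to show that every configuration $z$ with $d(z)\ge 9R+6$ behaves correctly for all the words that appear.

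For the sofic approximation, take $g,h\in B(R)$ with $gh\in B(R)$. Then each of $\omega_g,\omega_h,\omega_{gh}$ has length at most $R$. Consider the reduced free-group element $\rho=\omega_{gh}^{-1}\omega_g\omega_h$, which has length $k\le 3R$ and satisfies $\pi(\rho)=id_F$.

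\begin{itemize}
\item \textbf{Multiplicativity.} Since $d(z)\ge 9R+6\ge 3k+6$, Proposition~\ref{prop:key_words_triv} gives $\rho(X_n,Y_n)(z)=z$. Hence $\varphi_n(g)\varphi_n(h)(z)=\varphi_n(gh)(z)$. The permutations therefore agree on $G_n(9R+6)$, and $\dH(\varphi_n(g)\varphi_n(h),\varphi_n(gh))\le\eps$.
\item \textbf{Nontriviality.} For $g\ne id_F$ in $B(R)$, the word $\omega_g$ has length at most $R$ and $\pi(\omega_g)\neq id_F$. The same proposition shows $\varphi_n(g)(z)\ne z$ on $G_n(9R+6)$, so $\dH(\varphi_n(g),\id)\ge 1-\eps$.
\item \textbf{Identity.} We have $\varphi_n(id_F)=\id$, because $\omega_{id}$ is the empty word.
\end{itemize}

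The only care needed is that, because the evaluation is a homomorphism from $\FF$, the element $\varphi_n(gh)^{-1}\varphi_n(g)\varphi_n(h)$ equals $\rho(X_n,Y_n)$ for the reduced element $\rho$.

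For Gromov's condition, fix $z$ with $d(z)\ge 9R+6$ and compare the ball of radius $R$ around $z$ in the Schreier graph with the ball $B(R)$ in the Cayley graph. The natural map sends a group element $g$ to $\omega(X_n,Y_n)(z)$, where $\omega$ is any word of length at most $R$ representing $g$.

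\begin{itemize}
\item \textbf{Well-definedness.} Two such words $\omega,\omega'$ give $\omega'^{-1}\omega$ of length at most $2R$. So the map is well defined exactly when the implication $\pi(\omega'^{-1}\omega)=id\Rightarrow$ fixed point holds, which is $(1)\Rightarrow(2)$ of the proposition.
\item \textbf{Injectivity.} This is the converse direction $(2)\Rightarrow(1)$, applied to the same words of length at most $2R$.
\item \textbf{Edges.} For an edge labeled $t$ from $g$ with $|g|\le R-1$, the word $t\omega$ has length at most $R$, so labeled edges correspond.
\item \textbf{Label collisions.} Some vertex might carry extra coincidences between labels, for instance $X_n(v)=Y_n(v)$. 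These are also words of length at most $2R+1$, so they are governed by the same equivalence.
\end{itemize}

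This gives a rooted labeled isomorphism of the $R$-balls whenever $3(2R+2)+6\le 9R+6$, which holds for $R\ge 2$. Small $R$ is handled by monotonicity, using a larger radius.

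The main obstacle is the bookkeeping. One must verify that every free-group element to which Proposition~\ref{prop:key_words_triv} is applied has length $k$ with $3k+6\le 9R+6$, i.e.\ $k\le 3R$, and that the proposition is applied at the starting configuration $z$ itself rather than at translates whose depth has dropped. Anchoring all comparisons at $z$, and using composite words of length at most $3R$, keeps everything inside the depth budget $9R+6$.
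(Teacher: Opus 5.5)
This is essentially the paper's proof. It uses the same choice $r=9R+6$ in Lemma~\ref{lem:regular_set_prob} and applies Proposition~\ref{prop:key_words_triv} at $z$ itself to words of length at most $3R$; your $\omega_{gh}^{-1}\omega_g\omega_h$ is just a conjugate of the paper's $\omega_g\omega_h\omega_{gh}^{-1}$. Your treatment of the marked approximation is a more explicit version of the paper's remark that the $R$-ball is determined by which words of length at most $2R+1$ close into loops.

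There is one bookkeeping slip at the end. The words involved have length at most $2R+1$, as your own ``label collisions'' item says. So the requirement is $3(2R+1)+6=6R+9\le 9R+6$, which holds for every $R\ge1$, and no special treatment of small $R$ is needed. The proposed patch ``by monotonicity, using a larger radius'' would not be valid anyway: the hypothesis $n\ge m(R,\eps)$ only guarantees that $G_n(9R+6)$, not $G_n(9R'+6)$ for $R'>R$, has measure at least $1-\eps$.
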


\begin{proof}
 Lemma~\ref{lem:regular_set_prob}, applied to $r=9R+6$, implies that for any $n \geq m(R, \epsilon)$  we have
\[\Prob_n(G_n(9R+6)) \geq 1 - \epsilon.\]
Moreover, Proposition~\ref{prop:key_words_triv}, applied to $k=3R$, implies that for every word $\rho \in \FF$ of length at most $3R$, and every configuration $z \in G_n(9R+6)$, we have \[\pi(\rho)= id_F \iff \rho(X_n,Y_n)(z)=z.\]

In particular, for $g \in B(R)$ with $g \neq id_F$, we have $\phi_n(g)(z)=\omega_g(X_n,Y_n)(z) \ne z$, so 
\[\dH(\varphi_n(g), \id) \geq \Prob_n(G_n(9R+6)) \geq 1 - \epsilon.\]
Moreover, for any  $g,h \in B(R)$ such that $gh \in B(R)$,  the word $\rho_{g,h}=\omega_g\omega_h\omega_{gh}^{-1}$ has length at most $3R$ and evaluates to the trivial element in $F$. Thus, for any $z \in G_n(9R+6)$, and we have
\[\rho_{g,h}(X_n,Y_n)(z)=z.\]
But this implies the inequality
\[\dH\bigl(\varphi_n(g)\varphi_n(h),\varphi_n(gh)\bigr)=\dH\bigl(\varphi_n(g)\varphi_n(h)\varphi_n(gh)^{-1}, \id\bigr) \leq  1 - \Prob_n(G_n(9R+6)) \leq \epsilon.\]

Therefore, $\varphi_n$ indeed satisfies the definition of a $(B(R), \epsilon)$-sofic approximation to $F$.

Finally, the last assertion in the statement of the theorem follows from the fact that for a fixed set of marks, the isomorphism class of a rooted labeled ball with of radius $R$ is determined by the set of loops of length at most $2R+1$ in this ball. Then, by Proposition~\ref{prop:key_words_triv}, the rooted labeled ball of radius $R$ centered at a point from $G_n(9R+6)$ in the Schreier graph of the action of $\langle X_n, Y_n \rangle$ on $\Omega_n$, with its edge labels updated to $x^{\pm1}$ and $y^{\pm1}$, is always isomorphic to the ball of radius $R$ in the standard Cayley graph of $F$.
\end{proof}

\FloatBarrier
\section{Proof of Theorem~\ref{thm:V-intro}}\label{sec:final_proof_for_V}
 In this section we explain how to adapt the arguments from Sections~\ref{sec:fw_action}--\ref{sec:finishing_the_proof} to obtain the proof of soficity for Thompson's group $V$.
In fact, we will prove an even stronger quantitative version of Theorem~\ref{thm:V-intro}.

\begin{theorem}\label{thm:V-quantitative-sofic}
Thompson's group $V$ is sofic. Moreover, there exist a generating set $S=\{X,Y\}$ of $V$ and a constant $C>0$ such that the following statement holds. For any $\epsilon \in (0,1)$, any radius $R>0$, and any $n$ with 
\[n \geq \dfrac{(2^{18CR+1}-1)(2^{18CR+2}-1)}{\epsilon},\] there exists a map $\varphi_n: V \to \Sym(\Omega_n)$ that defines a $(B(R),\epsilon)$-sofic approximation for $V$, where $B(R)$ is the ball of radius $R$ centered at $id_V$ in the Cayley graph of $V$ with respect to $S$.
Moreover, there exists an $(R,\epsilon)$ marked approximation to $V$ with the vertex set $\Omega_n$.
\end{theorem}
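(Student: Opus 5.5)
The plan is to run the argument of Sections~\ref{sec:fw_action}--\ref{sec:finishing_the_proof} again, replacing the order-preserving generators $x,y$ by a two-element generating set $S=\{X,Y\}$ of $V$ (for instance the one from \cite[Theorem 1.3]{BQ}). Each generator $g\in S$ is a prefix-replacement transformation with a reduced representation $(\alpha_1,\ldots,\alpha_k)\to(\beta_1,\ldots,\beta_k)$. Let $C$ be a constant bounding all $|\alpha_i|$, $|\beta_i|$ for both generators and their inverses, and also bounding the number of letters that $g^{\pm1}$ can add to or remove from a word. The first step is to build an analogue of Proposition~\ref{prop:F-word-action}: a faithful action $\sigma$ of $V$ on $W$ that fixes $\emptyword$ and extends the partial action $P$.

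To construct $\sigma$, I would use the interval trick from Section~\ref{sec:fw_action}, but transported to the Cantor space. The words on which $P_g$ is undefined are the proper prefixes of the $\alpha_i$, that is, the internal vertices of the domain tree. These form a finite set, and they are in bijection with the internal vertices of the range tree, since the two trees have the same number of leaves. So $\sigma_g$ can be defined as $P_g$ on words with a prefix among the $\alpha_i$, and as a bijection between the internal vertex sets (respecting $\emptyword\mapsto\emptyword$) on the rest. The difficulty is that these choices must make $g\mapsto\sigma_g$ a homomorphism, and an arbitrary choice will not.

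The approach I would try first is to embed $W$ equivariantly into a $V$-set. One option is to identify a word $w$ with the eventually periodic point $w\bar{c}\in\mathcal{C}$ for a fixed tail pattern $\bar{c}$. Another is to enlarge the interval-midpoint idea to a $V$-invariant countable set of points on which $V$ acts freely enough, choose a bijection $\kappa$ onto it that agrees with the natural map on words lying below the codes, and then set $\sigma_g=\kappa^{-1}g\kappa$. As in the footnote in Section~\ref{sec:fw_action}, $\kappa$ may need a correction so that $\emptyword$ is sent to a global fixed point. If no such point exists, I would adjoin one formally or use the orbit structure, which makes this the most delicate part of the argument. The remark after Proposition~\ref{prop:key_words_triv} explains why this matters: the implication $\pi(\rho)=\id\Rightarrow$ fixed point needs $\sigma$ to be a genuine action.

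With $\sigma$ in hand, the next step is to define permutations $X_n,Y_n$ of $\Omega_n$ by the same tree-rewiring recipe as in Figures~\ref{fig:x}--\ref{fig:y}. In the regular case, where the depth around the root is at least $C$, the generator $g$ rewrites the rows of the internal vertices of its domain tree so that the leaf at address $\alpha_i$ moves to address $\beta_i$. In the exceptional cases, where the tree is degenerate, the rewiring is a fixed injective swap of rows, and bijectivity is checked by the argument of Lemma~\ref{lem:bijections}. After that, Lemmas~\ref{lem:xy_on_good_paths}--\ref{lem:addr_at_the_top} and Proposition~\ref{prop:key_words_triv} go through with the constants multiplied by $C$: each step changes depth and lengths by at most $C$, and the leaf-path lemma uses $\sigma_g=P_g$ below the codes. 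This yields the statement that for $d(z)\ge 6Ck+\mathrm{const}$ we have $\rho(X_n,Y_n)(z)=z$ if and only if $\pi(\rho)=\id_V$.

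The final step applies this with $k=3R$, taking the regular set $G_n(r)$ with $r=18CR$, so that $2^{r+1}-1$ matches the stated bound. Lemma~\ref{lem:regular_set_prob} then gives $\Prob_n(G_n(18CR))\ge1-\epsilon$ for $n$ as in the statement. The sofic approximation $\varphi_n(g)=\omega_g(X_n,Y_n)$ and the marked-graph statement follow verbatim as in the proof of Theorem~\ref{thm:F_sofic_quantitative}. The main obstacle I expect is the construction of the genuine action $\sigma$ of $V$ on $W$ fixing $\emptyword$ and extending $P$.
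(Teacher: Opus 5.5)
\textbf{The gap.} The step you flag as the main obstacle cannot be carried out, so the plan of running the argument with generators of $V$ itself does not go through. You ask for an action $\sigma$ of all of $V$ on $W$ that fixes $\emptyword$ and has $\sigma_g$ extending $P_g$ for every $g$. No such action exists.

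Your tail idea $w\mapsto w\bar c$ already fails for a simple reason. The $V$-orbits in $\mathcal{C}$ are exactly the tail-equivalence classes, so $V$ has no global fixed point, and the image $\bar c$ of $\emptyword$ is moved.

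The general obstruction is the following. Write $I_g$ for the set of proper prefixes of the domain code of the reduced representation of $g$. Since $P_g$ is a bijection $W\setminus I_g\to W\setminus I_{g^{-1}}$, any such $\sigma_g$ must map $I_g$ onto $I_{g^{-1}}$. Now take the reduced elements
\begin{itemize}
\item $s$ replacing $(0,1)$ by $(1,0)$,
\item $a$ replacing $(00,01,1)$ by $(01,00,1)$,
\item $b$ replacing $(0,10,11)$ by $(0,11,10)$,
\item $h$ replacing $(00,01,10,11)$ by $(01,11,00,10)$.
\end{itemize}
A direct computation gives $h^{-1}sh=ab$. The constraints then force the following:
\begin{itemize}
\item $\sigma_a$ and $\sigma_b$ both fix the words $0$ and $1$;
\item $\sigma_s$ swaps $0$ and $1$;
\item $\sigma_h$ preserves $\{0,1\}$, because $I_h=I_{h^{-1}}=\{\emptyword,0,1\}$.
\end{itemize}
Hence $\sigma_{ab}=\sigma_h^{-1}\sigma_s\sigma_h$ would have to both swap and fix $0$ and $1$, a contradiction. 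So no choice of $\kappa$, added fixed point, or orbit bookkeeping can produce the analogue of Proposition~\ref{prop:F-word-action} for $V$ itself.

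\textbf{The missing idea.} Replace $V$ by an isomorphic copy that has the needed fixed points. Take $V_1=\mathrm{Rist}_V(11\mathcal{C})\cong V$; it is generated by two elements $X,Y$ because $V$ is $2$-generated.
\begin{itemize}
\item $V_1$ fixes $0\mathcal{C}\cup 10\mathcal{C}$ pointwise. With $j(w)=w10^{\infty}$, it therefore fixes $j(\emptyword)=10^{\infty}$.
\item It also fixes the one point $0^{\infty}$ of the $V$-invariant set $D$ of eventually-zero sequences that $j$ misses. So $V_1$ preserves $j(W)=D\setminus\{0^{\infty}\}$.
\item One has $jP_g=gj$ wherever $P_g$ is defined. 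Such a word begins with $0$ or $10$, where both sides act trivially, or with $11\alpha$, which $g$ replaces by $11\beta$.
\end{itemize}
Thus $\sigma_g=j^{-1}gj$ is the required action of $V_1$.

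The rest of your outline then runs for $V_1$: rewiring on regular sets, constants scaled by $C$, and Lemma~\ref{lem:regular_set_prob}, using the prefix-replacement representations of $X,Y$ as elements of $V$. The result transfers to $V$ through the isomorphism, with $S$ the image of $\{X,Y\}$.

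A minor correction: the depth required is $6Ck+1=18CR+1$. So you should use $G_n(18CR+1)$, not $G_n(18CR)$; this is what yields the bound $(2^{18CR+1}-1)(2^{18CR+2}-1)/\epsilon$.
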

 
\begin{proof}[Proof of Theorem~\ref{thm:V-quantitative-sofic}.]
Let us denote by $P$ the partial action of $V$ on the set of all binary words $W$ by prefix replacement transformations. 
First, we assume that there exist two elements $X$ and $Y$ of Thompson's group $V$ that satisfy the following two properties.

\begin{enumerate}
    \item The subgroup $V_1=\langle X, Y\rangle$ of $V$ is isomorphic to $V$.
    \item There exists a faithful action $\sigma$ of $V_1$ on the set of all finite binary words $W$ that fixes $\emptyword$ and such that for every $g \in V_1$, $\sigma_g$ extends $P_g$.
\end{enumerate}

We claim that this assumption implies the statement of our theorem. Indeed, it is easy to see that it suffices to construct suitable sofic approximations for $V_1$.
Consider the representations of $X$ and $Y$ by prefix-replacement transformations in $V$, and let $C>0$ be the largest possible length of a prefix in the representations of $X$ and $Y$. Note that the choice of $C$ and the fact that $\sigma_t$ extends $P_t$ for $t \in \{X^{\pm1}, Y^{\pm1}\}$ imply that for any $w \in W$, we have 
\[\bigl||w|-|\sigma_t(w)|\bigr| < C.\]

Next, we construct sofic approximations for $V_1$ by permutations from $\Sym(\Omega_n)$ using the same method that worked for the  group $F$. 

First, we will define the maps $X_n$, $Y_n$, $X_n^{-1}$ and $Y_n^{-1} $ as partial bijections of $\Omega_n$. Pick an arbitrary $t \in \{X,Y,X^{-1}, Y^{-1}\}$ and define the corresponding partial map $t_n$ as follows\footnote{At this stage, we formally do not assume that the maps denoted by $t_n$ and $t_n^{-1}$ are inverses of each other, but at the end of the construction we will indeed have $t_n^{-1}=(t_n)^{-1}$}. Consider the prefix replacement representation of $t$ in $V$,  $t:(\alpha_1 , \ldots, \alpha_l) \mapsto (\beta_1 , \ldots, \beta_l)$.
First, let us define the \textbf{regular set} for $t_n$ in $\Omega_n$. Recall that for $z \in {\Omega_n}$ and $\mu \in W$, we denoted by $\theta_z(\mu)(0)$ the endpoint of the path started at $0$ and coded by $\mu$ in $\Gamma(z)$. To simplify the notation, for the remainder of the argument, we will introduce the map $\tau_z: W \to [n]$ defined by $\mu \mapsto\theta_z(\mu)(0)$. 
Let $I_t$ denote the set of all proper prefixes of the words $(\alpha_1 , \ldots, \alpha_l)$ in $W$. Then $z \in \Omega_n$ belongs to the regular set for $t_n$ if and only if the map $\mu \mapsto \tau_z(\mu)$ is an injective map from $I_t \cup \{\alpha_1 , \ldots, \alpha_l\}$ to $[n]$. Note that since $\sigma_t$ extends $P_t$ on $W$, it bijectively maps $I_t \cup \{\alpha_1 , \ldots, \alpha_l\}$ to the set of all prefixes of the words from $\{\beta_1 , \ldots, \beta_l\}$, and $\sigma_t(I_t)$ is the set of all proper prefixes of the words from $\{\beta_1 , \ldots, \beta_l\}$, so $ \sigma_t(I_t)=I_{t^{-1}}$.

For $z$ in the regular set for $t_n$, we will say that $m \in [n]$ is \textbf{an internal vertex} of $t_n$ in $z$, if $m \in \tau_z(I_t)$, and we will denote by $\mathcal{I}_{t_n}$ the set of all internal vertices of $t_n$ in $z$. Similarly, we say that $m$ is \textbf{a leaf} of $t_n$ in $z$ if $m \in \tau_z(\{\alpha_1 , \ldots, \alpha_l\})$, and the set of the leaves of $t_n$ will be denoted by $\mathcal{P}_{t_n}$. 

For $z$ in the regular set for $t_n$, we can define an inverse map $\tau_z^{-1}: \mathcal{I}_{t_n} \cup \mathcal{P}_{t_n} \to I_t \cup \{\alpha_1 , \ldots, \alpha_l\}$. Thus, the path coded by the word $\tau_z^{-1}(m)$ leads from $0$ to $m$ in $\Gamma(z)$, but in general, $\tau_z^{-1}(m)$ does not have to be the minimal in the length-lexicographic order word with this property.

Now, let $z=(L,R)$ be a point in the regular set for $t_n$. Then, $t_n$ will map $z=(L,R)$ to $z'=(L',R')$ defined as follows. 

For any $ m \notin \mathcal{I}_{t_n}$, we set $L'(m)=L(m)$ and $R'(m)=R(m)$. Then we define $L'$  and $R'$ on $\mathcal{I}_{t_n}$ in a way that ensures that $z'$ satisfies the following two conditions.

\begin{enumerate}
    \item The configuration $z'$ is in the regular set for the map denoted by the symbol $t_n^{-1}$.
    
    \item For every $m \in \mathcal{I}_{t_n} \cup \mathcal{P}_{t_n}$, the path in $\Gamma(z')$ started at $0$ and coded by $\sigma_t(\tau_z^{-1}(m))$ ends at $m$.
\end{enumerate} 

The procedure for defining $L'(m)$ is explicitly described as follows. For any $m \in \mathcal{I}_{t_n}$, we first evaluate $\tau_z^{-1}(m) \in I_t$. Then, the word $\sigma_t(\tau_z^{-1}(m))$ is in $I_{t^{-1}}$, so  
\[\sigma_t(\tau_z^{-1}(m))0 \in I_{t^{-1}} \cup \{\beta_1 , \ldots, \beta_l\},\] and we have \[\sigma_t^{-1}(\sigma_t(\tau_z^{-1}(m))0) \in I_t \cup \{\alpha_1 , \ldots, \alpha_l\}\]
Then, we set $L'(m)$ to be equal to the endpoint $m'$ of a path in $\Gamma(z)$ started at $0$ and coded by the word $\sigma_t^{-1}(\sigma_t(\tau_z^{-1}(m))0)$. Note that we have  $m' \in \mathcal{I}_{t_n}\cup \mathcal{P}_{t_n}$, and $m'$ is the unique number in $\mathcal{I}_{t_n} \cup \mathcal{P}_{t_n}$ such that 
\begin{equation}\label{eq:left}
\sigma_t(\tau_z^{-1}(m'))=\sigma_t(\tau_z^{-1}(m))0
\end{equation}

Similarly, the image $R'(m)$ is chosen as the unique number $m'' \in \mathcal{I}_{t_n} \cup \mathcal{P}_{t_n}$ such that
\begin{equation}\label{eq:right}
 \sigma_t(\tau_z^{-1}(m''))=\sigma_t(\tau_z^{-1}(m))1.   
\end{equation}

It is not difficult to see that for any $m \in \mathcal{I}_{t_n} \cup \mathcal{P}_{t_n}$, by reading the word $\sigma_t(\tau_z^{-1}(m))$ backwards and applying the rules \eqref{eq:left} and \eqref{eq:right}, we indeed recover the vertices of a directed path from $0$ to $m$ in $\Gamma(z')$ which is coded by the word $\sigma_t(\tau_z^{-1}(m))$.
Since \[I_{t^{-1}} \cup \{\beta_1 , \ldots, \beta_l\}=\sigma_t(I_t \cup \{\alpha_1 , \ldots, \alpha_l\}),\]
we may conclude that all paths in $\Gamma(z')$ started at $0$ and coded by the words from $I_{t^{-1}} \cup \{\beta_1 , \ldots, \beta_l\}$ 
end at different points, so $z'$ is indeed in the regular set for $t_n^{-1}$. Moreover, we also have $t_n^{-1}(z')=z$, so the partial maps denoted by the symbols $t_n$ and $t_n^{-1}$ are indeed inverses of each other. Then, we arbitrarily define $X_n$ and $Y_n$ off their regular sets to obtain permutations of $\Omega_n$, and extend $X_n^{-1}$ and $Y_n^{-1}$  off their regular sets in a compatible way, so that we have $(X_n)^{-1}=X_n^{-1}$ and $(Y_n)^{-1}=Y_n^{-1}$ in $\Sym(\Omega_n)$.

With the construction of $X_n^{\pm1}$ and $Y_n^{\pm1}$ complete, we observe that the following version of Remark~\ref{rem:Xn_and_sigma_x} and Remark~\ref{rem:Yn_and_sigma_y} holds in our setting.

\begin{claim}
Consider any $t \in \{X,Y,X^{-1}, Y^{-1}\}$ and any $z \in G_n(2C+1)$. Then $z$ belongs to the regular set of $t_n$. Moreover, if we set $z'=t_n(z)$, then for any $m \in \mathcal{I}_{t_n} \cup\mathcal{P}_{t_n}$, we have 
\[\mu(z',m)=\sigma_t(\mu(z,m)).\]
\end{claim}

\begin{proof}
Consider the prefix replacement representation of $t$ in $V$,  $t:(\alpha_1 , \ldots, \alpha_l) \mapsto (\beta_1 , \ldots, \beta_l)$. By the definition of $C$, no prefix in these lists may be longer than $C$. However, the condition $z \in G_n(2C+1)$ implies  that all paths of length at most $2C+1$ started at $0$ in $\Gamma(z)$ have distinct endpoints, so 
the map $\tau_z: I_t \cup\{\alpha_1, \ldots, \alpha_l\}\to[n]$ is injective.

Moreover, in this case, 
for every $m \in B(z, 2C+1)$ there is exactly one directed path from $0$ to $m$ of length at most $2C+1$, and this path is coded by the address $\mu(z,m)$. Hence, for any $m \in \mathcal{I}_{t_n} \cup\mathcal{P}_{t_n}$ we have \[\tau_z^{-1}(m)=\mu(z,m).\]
In the construction of $t_n$, we already showed that for any $m \in \mathcal{I}_{t_n} \cup\mathcal{P}_{t_n}$
the path in $\Gamma(z')$ started at $0$ and coded by $\sigma_t(\tau_z^{-1}(m))$ ends at $m$. Since we have $\tau_z^{-1}(m)=\mu(z,m)$, we see that there is a path in $\Gamma(z')$ started at $0$ and coded by $\sigma_t(\mu(z,m))$ which ends at $m$. 
Moreover, since $z'$ belongs to the regular set of $t_n^{-1}$, we conclude that it is the only path from $0$ to $m$ in $\Gamma(z')$ that visits only the numbers from $\mathcal{I}_{t_n}$ before reaching $m$.

Therefore, to conclude that $\sigma_t(\mu(z,m))$ is indeed the address of $m$ in $z'$, it suffices to show that any path $\gamma$ from $0$ to $m$ that visits some vertex $h \notin \mathcal{I}_{t_n}$, before it reaches $m$, will be longer than $|\sigma_t(\mu(z,m))|$. Indeed, by the construction of $z'$, the first vertex outside of $\mathcal{I}_{t_n} $ visited by $\gamma$ will be some leaf $h$ of $t_n$ in $z$. Note that we have $(L'(m),R'(m))=(L(m),R(m))$ for any $m \notin \mathcal{I}_{t_n}$. Thus, the inequality $|\mu(z,h)| \leq C$ together with the assumption $z \in G_n(2C+1)$ imply that any path started at $h$ in $\Gamma(z')$ will visit only the vertices outside of $\mathcal{I}_{t_n} \cup \mathcal{P}_{t_n}$ for the first $C+1$ steps. Thus, it will be longer than the path coded by $\sigma_t(\mu(z,m))$ that has the length at most $C$.
Therefore, we indeed have $\mu(z',m)=\sigma_t(\mu(z,m))$ for each $m \in \mathcal{I}_{t_n} \cup \mathcal{P}_{t_n}$.
\end{proof}

This claim allows us to keep track of the addresses of internal vertices and leaves of $t\in \{X_n^{\pm1}, Y_n^{\pm1}\}$ if we apply $t$ to a configuration $z$ from $G_n(r)$ with $r \geq 2C+1$.
The rest of the proof for $V_1$ follows the arguments from Section~\ref{sec:maps_and_approximations} and Section~\ref{sec:finishing_the_proof} with only minor technical updates in the statements of Lemmas \ref{lem:xy_on_good_paths} -- \ref{lem:addr_at_the_top} and Proposition~\ref{prop:key_words_triv}.
We list these changes below.
\begin{enumerate}
    \item Assumption $z \in G_n(4)$ in Lemma~\ref{lem:xy_on_good_paths} is replaced by $z \in G_n(2C+1)$.
    \item The assumption $d(z) \geq 5$ in Lemma~\ref{lem:regularity persists} is replaced by $ d(z) \geq 3C+1$, and the inequality $d(z') \geq d(z)-1 $ is replaced by $d(z') \geq d(z)-C$.
    \item The assumption $d(z) \geq 5$ in Lemma~\ref{lem:length_control} is replaced by $ d(z) \geq 3C+1$, and the inequality $|l_{z'}(m) -l_z(m)| \leq 1 $ is replaced by $|l_{z'}(m) -l_z(m)| \leq C $.
    \item The assumption $d(z) \geq 5$ in Lemma~\ref{lem:top_induction_base} is replaced by $ d(z) \geq 5C+1$, and the assumption $l_{z}(m) \leq d(z)-2$ is replaced by $l_{z}(m) \leq d(z)-2C$.
    \item The assumption $d(z) \geq k+6$ in Lemma~\ref{lem:no_outchanges} is replaced by $d(z) \geq C(k+3)+1$. The assumption $l_z(m) \geq k+3$ is replaced by $l_z(m) \geq C(k+1)$.
    \item The assumption $d(z) \geq 3k+6$ in Lemma~\ref{lem:addr_at_the_top} is replaced by $d(z) \geq 6Ck+1$ and the assumption $l_z(m) \leq k+4 $ is replaced by the assumption $l_z(m) \leq C(k+1)$. 
    \item The assumption $d(z) \geq 3k+6$ in Proposition~\ref{prop:key_words_triv} is replaced by $d(z) \geq 6Ck+1$.
\end{enumerate}

In Section 5, Lemma~\ref{lem:regular_set_prob} does not need any update, and Theorem~\ref{thm:F_sofic_quantitative} still holds for $V_1$ with \[m(R, \epsilon)=\dfrac{(2^{18CR+1}-1)(2^{18CR+2}-1)}{\epsilon}.\]

Finally, a sofic approximation for $V$ is obtained by taking a composition of an isomorphism between $V_1$ and $V$ and the approximation for $V_1$.

It remains to find suitable elements $X$ and $Y$ in $V$ and the action $\sigma$ of $V_1$ on $W$. 

We claim that $V_1$ can be chosen as a rigid stabilizer of $11\mathcal{C}$ for the standard action of $V$ on $\mathcal{C}$.  
Indeed, let us set $V_1=\mathrm{Rist}_V(11\mathcal{C})$. It is clear that $V_1$ is isomorphic to $V$, and since $V$ is generated by two elements, see for example \cite[Theorem 1.3]{BQ}, we can find $X,Y \in V_1$ that generate $V_1$. Thus, it remains to check that $V_1$ admits a faithful action on $W$ that fixes $\emptyword$ and has the following property. For each $g \in V_1$, we have $\sigma_g(w)=P_g(w)$ for every $w \in W$ where $P_g$ is well-defined.

Let $D \subset \mathcal{C}$ be the set of all binary sequences $\omega \in \mathcal{C}$ that eventually become $0$. Consider an injective map $j:W \rightarrow D$ defined by 
\[j(w)=w10^{\infty}.\]
It is easy to see that $j$ is a bijection between $W$ and $D\setminus\{0^{\infty}\}$ and $j(\emptyword)=10^{\infty}$ is a fixed point of $V_1$. Moreover, since $V_1$ fixes $0^{\infty}$, $V_1$ acts on $D\setminus\{0^{\infty}\}$, and this action is faithful, because $D\setminus\{0^{\infty}\}$ is dense in $\mathcal{C}$. We claim that $g \mapsto \sigma_g=j^{-1}gj$ produces the desired action of $V_1$ on $W$. 
It remains to show that for any word $w \in W$ such that $P_g$ is defined, we have 
\[jP_g(w)=gj(w).\]
It is clearly true for the identity element, so we may assume that $g\ne id$. Then, since $P_g(w)$ is well-defined, one of the words $0$, $10$, and $11$ is a prefix of $w$. If $w$ has prefix $0$ or $10$, then $w$ is fixed by $P_g$ and $j(w)$ is fixed by $g$, so we get:
\[jP_g(w)=j(w)=gj(w).\]

Otherwise, $w$ has the form $w=11\alpha w'$ where $\alpha, \,w' \in W$, and $g$ replaces the prefix $11\alpha$ by a prefix of the form $11\beta$ for some $\beta$. But then we have 

\[jP_g(w)=j(11\beta w')=11\beta w'10^{\infty}=g(11\alpha w'10^{\infty})=g(j(w)).\]

Therefore, $jP_g(w)=g(j(w))$ in this case as well, and the map $g \mapsto \sigma_g$ defines the desired action of $V_1$ on $W$. This finishes the proof of the theorem.

\end{proof}

\begin{remark}
It is not difficult to see that the proof of Theorem~\ref{thm:V-quantitative-sofic} does not depend on the fact that $V$ is generated by $2$ elements or on the fact that we are working with the words over the alphabet $\{0,1\}$. In particular, it can be generalized to the Higman-Thompson groups $V_n$ for all $n \geq 2$. Moreover, it will also work for the stabilizer of $\emptyword$ in the group of color-preserving quasi-automorphisms $QAut(\mathcal{T}_{2,c})$. However, soficity of all of these groups immediately follows from soficity of $V$, as we explain in Corollary~\ref{cor:sofic_bc_in_V} below.
\end{remark}

\begin{corollary}\label{cor:sofic_bc_in_V}
The following groups are sofic.
\begin{enumerate}
    \item The Higman-Thompson group $V_n$, for $n \geq 2$.
    \item The group $QAut(\mathcal{T}_{2,c})$.
    \item The groups of quasi-automorphisms $QV_{n,r,p}$ for $n \geq 2, r \geq 1, p \geq 0$.
    \item The generalized Houghton groups $H_{r,p}$, $r \geq 1$, $p \geq 0$.
    
\end{enumerate}
 
\end{corollary}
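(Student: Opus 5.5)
Soficity passes to subgroups, so by Theorem~\ref{thm:V-quantitative-sofic} it is enough to embed each listed group into $V$. I will treat the four families separately and in each case cite the known embedding into $V$ or build it directly.

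For $V_n$ ($n\ge 2$) I use that $V_n$ embeds in $V=V_2$. This is shown, for instance, in the work of Higman and later Brin on the Higman--Thompson groups. The concrete picture is an $n$-ary Cantor space $\{0,\ldots,n-1\}^{\mathbb N}$ identified with the binary Cantor space $\mathcal C$ by coding each symbol $i$ with a fixed complete binary prefix code of size $n$, say $\{0,10,\ldots,1^{n-2}0,1^{n-1}\}$. Under this coding a prefix replacement on $n$-ary words becomes a prefix replacement on binary words. So the conjugated homeomorphism lies in $V$, and the map is an injective homomorphism.

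For the quasi-automorphism groups $QAut(\mathcal T_{2,c})$ and $QV_{n,r,p}$, and for the generalized Houghton groups $H_{r,p}$, I would use the constructions of Lehnert and of Nucinkis--St.~John-Green and their coauthors. These groups act faithfully on the ends of a rooted tree together with a countable set of vertices or rays. The plan is to realize each such group inside $V_{n,r}$ (or $V$) by coding the vertices and rays as clopen pieces of a Cantor space. The $p$ extra points are recorded by $p$ additional rays or isolated pieces, and the $r$ roots by $r$ copies of $\mathcal C$. Since $V_{n,r}\cong V_{n,r'}$ for suitable $r'$, and $V_{n,r}$ embeds into $V$ as in the first step, this is enough.

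For $H_{r,p}$ I recall that it is the subgroup of $QV_{2,r,p}$ (or of $V$ directly) that acts by eventual translations on $r$ rays plus permutations of $p$ points. Each ray can be encoded as the sequence of clopen sets $1^k0\mathcal C$ inside a copy of $\mathcal C$. Eventual translation along the ray then becomes a prefix replacement $1^k0\mapsto 1^{k+m}0$ on all but finitely many pieces, which is an element of $V$.

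The main obstacle is the second step: checking that the quasi-automorphism groups really embed in $V$ rather than only in some larger group of homeomorphisms. The key point is that a quasi-automorphism changes only finitely many edges, so its action on ends is a prefix replacement away from finitely many exceptional vertices, and those vertices must be encoded by clopen sets. I would first try to cite the embeddings $QV_{n,r,p}\hookrightarrow V_{n,r+p}$ and $QAut(\mathcal T_{2,c})\hookrightarrow QV$ from the literature, and build the encoding by hand only if a precise reference is not available.
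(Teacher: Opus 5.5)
\textbf{The gap is in items (2) and (3).} Your reduction matches the paper's: soficity passes to subgroups, so it suffices to embed each group in $V$. The paper's proof is nothing but citations: Higman for $V_n$, Bleak--Matucci--Neunh\"offer (Theorem~4) for $QAut(\mathcal T_{2,c})$, and [G19, Corollary~1.3] for $QV_{n,r,p}$ and $H_{r,p}$.

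Two of your items are fine:
\begin{enumerate}
\item Your coding argument for $V_n$ is correct and self-contained.
\item Your ray encoding for $H_{r,p}$ also works, provided $H_{r,p}$ is the group of bijections of $r$ rays and $p$ points that are eventually translations on each ray. On the copy of $\mathcal C$ for ray $i$ the element is eventually the replacement $1^K\mapsto 1^{K+m_i}$. The finitely many remaining pieces match up because $\sum_i m_i=0$, and the result lies in $V_{2,r+p}\cong V$.
\end{enumerate}

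For (2) and (3), you name the embedding into $V$ as the main obstacle and then leave it open, giving neither a construction nor a precise reference. The one concrete step you propose, $QAut(\mathcal T_{2,c})\hookrightarrow QV$, is vacuous: in the Nucinkis--St.~John-Green notation $QV$ is Lehnert's group $QAut(\mathcal T_{2,c})$ itself. The content you need is exactly the Bleak--Matucci--Neunh\"offer embedding into $V$. An embedding $QV_{n,r,p}\hookrightarrow V_{n,r+p}$ is something you hope to find, not something you have located.

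Your heuristic also puts the difficulty in the wrong place. The action on ends is not faithful: a transposition of two vertices is a quasi-automorphism that acts trivially on $\mathcal C$. So it is not enough to encode the finitely many exceptional vertices of a given $g$. Every vertex needs its own clopen set, chosen independently of $g$, so that the infinitely many non-exceptional vertices are still moved by a prefix replacement.

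A marker symbol does this:
\begin{enumerate}
\item In $\{0,1,2\}^{\mathbb N}$, let the ends be the $2$-free sequences, and let the vertex $w$ correspond to the clopen set $w2\{0,1,2\}^{\mathbb N}$.
\item Suppose every vertex where $g$ fails to respect the coloured edges has length $<N$. Then $g(\alpha u)=g(\alpha)u$ for $|\alpha|=N$.
\item The words $g(\alpha)$ form a complete binary prefix code, and its proper prefixes are exactly the $g(w)$ with $|w|<N$.
\item Hence $g$ acts as the ternary prefix replacement $\alpha\mapsto g(\alpha)$, $w2\mapsto g(w)2$.
\end{enumerate}
This is canonical in $g$, so it gives an injective homomorphism $QAut(\mathcal T_{2,c})\to V_3$, and $V_3\hookrightarrow V$ by your own coding argument. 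The same construction adapts to $QV_{n,r,p}$: use the alphabet $\{0,\dots,n-1,\star\}$ and one copy of the Cantor space per root and per extra point. To close the gap, either include such an argument or cite the precise embeddings as the paper does.
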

\begin{proof}
The Higman-Thompson groups $V_n$ embed into $V$ by the work of Higman, see \cite[Section 7]{H74}.
The group $QAut(\mathcal{T}_{2,c})$ embeds into $V$  by \cite[Theorem $4$]{BMN}. The groups $QV_{n,r,p}$ and $H_{r,p}$ embed into $V$ by \cite[Corollary $1.3$]{G19}.
\end{proof}

\end{document}